\journalname{Submitted}
\newcommand*{\Ab}{{\mathbb A}}
\newcommand*{\Rb}{{\mathbb R}}
\newcommand*{\la}{{\langle}}
\newcommand*{\ra}{{\rangle}}
\newcommand*{\sh}{{\,\llcorner\!\llcorner\!\!\!\lrcorner\,}}
\newcommand*{\Sb}{{\mathbb S}}
\newcommand*{\Nb}{{\mathbb N}}
\newcommand*{\cL}{{\mathrm L}}
\definecolor{todocolor}{HTML}{fa73ff}
\begin{document}

\title{The Exponential Lie Series and a Chen--Strichartz Formula for L{\'e}vy Processes}

\author{Kurusch Ebrahimi--Fard$^{\mathbf{1}}$, Fr\'ed\'eric Patras$^{\mathbf{2}}$, Anke Wiese$^{\mathbf{3}}$}
\authorrunning{K.~Ebrahimi--Fard, F.~Patras, A.~Wiese}
\titlerunning{Exponential Lie Series and Chen--Strichartz Formula}
\institute{$^1$Department of Mathematical Sciences, NTNU, 
NO 7491 Trondheim, Norway.\\
$^2$Current address: Laboratoire Ypatia des Sciences Mathématiques, Roma, Italia. Permanent address: Univ.~C\^ote d'Azur et CNRS, UMR 7351, Parc Valrose, 06108 Nice Cedex 02, France.\\
$^3$Maxwell Institute for Mathematical Sciences
and School of Mathematical and Computer Sciences,
Heriot-Watt University, Edinburgh EH14 4AS, UK}

\date{Received: \today}

\voffset=10ex 

\maketitle
\begin{abstract}
In this paper, we derive a Chen--Strichartz formula for stochastic 
differential equations driven by L{\'e}vy processes, that is,
we derive a series expansion 
of the logarithm of the flowmap of the stochastic differential 
equation in terms of commutators of vector fields with stochastic 
coefficients, and we provide an explicit formula for the components in this series. The stochastic components are generated by the L{\'e}vy processes that drive the stochastic differential equation and their quadratic variation and 
 power jumps; the vector fields are given as linear 
combinations of commutators of 
elements in
the pre-Lie Magnus expansion generated by the original vector fields governing our stochastic differential equation.
In particular, we show the 
logarithm of the flowmap is  a Lie series. These results extend previous results
for deterministic differential equations and continuous stochastic differential equations. For these, the Chen--Strichartz series has shown to play a pivotal 
role in the design of numerical integration schemes that 
preserve qualitative properties of the solution such as the
construction of geometric numerical schemes and in the context of efficient numerical schemes.

\keywords{L{\'e}vy processes, stochastic flow, exponential Lie series, Chen--Strichartz series, pre-Lie Magnus series}
\end{abstract}


\section{Introduction}
\label{introduction}

We consider ${\mathbb R}^N$-valued homogeneous It\^o stochastic differential systems
driven by L{\'e}vy processes.
 Our aim is to answer the question whether for It{\^o} stochastic differential equations driven by L{\'e}vy processes
 the logarithm of the flowmap can be represented as a stochastic series
in commutators of vector fields, in other words as a Lie series.
 Such a series expression for the flowmap is known as exponential Lie series and its representations in terms of commutators of vector fields as Chen--Strichartz formula.  We further aim to derive an explicit formula for the components of this series. 
For deterministic and continuous stochastic differential equations 
this series has been shown to play a key role in the design of numerical integration schemes that preserve 
qualitative properties of the solution to the given equation. 
The development of the exponential Lie series for deterministic systems originates in the 1950s in the work by Magnus \cite{Magnus} 
and later Strichartz \cite{Strichartz}, and 
 for the stochastic setting for Wiener-driven stochastic differential equations understood in the Fisk--Stratonovich sense by Azencott \cite{Azencott} and Ben Arous \cite{Ben Arous}, and has since been developed and applied in a variety of settings. For the design of numerical integration schemes for Wiener-driven stochastic differential equations, see for example
Castell \& Gaines \cite{CasGai95,CasGai96}, Burrage \& Burrage \cite{BB}, Malham \& Wiese \cite{Lie_Group_paper}, Lord, Malham \& Wiese \cite{LMW2008}, and Armstrong \& King \cite{Armstrong_King}.      

This paper builds on our previous work 
in \cite{Lie_Group_paper},
where for continuous semimartingales 
the algebraic structure
that underlies the flowmap and its computations was established and the exponential Lie series was derived.
Here, we consider stochastic differential equations 
driven by L{\'e}vy processes  of the  form
\begin{equation*}
	Y_t=Y_0+\int_0^t V_0(Y_{s-}){\mathrm d}s
		+ \sum_{i=1}^d\int_0^t V_i(Y_{s-})\,\mathrm{d} W_s^i
		+ \sum_{i=d+1}^{\ell}\int_0^t V_i(Y_{s-})\mathrm{d}J^{i}_s,\qquad t\le T,
\end{equation*}
where $T<\infty$ is the time  horizon, and where the solution process $Y_t$ is $\mathbb R^N$-valued. 
For each $i=1,\ldots,d$, the  $W_t^i$ are independent Wiener processes, and for $i=d+1,\ldots,\ell$ the $J^{i}$ are 
the purely discontinuous martingale parts of the given L{\'e}vy processes. We assume these are independent and
have moments to all orders. We can then express the processes $J^i$ in the form
\begin{equation*}
	J^i_t=\int_0^t\int_{\mathbb{R}}v\,\bar{Q}^i(\mathrm{d}v,\mathrm{d}s),
\end{equation*}
where the
$\bar{Q}^i(\mathrm{d}v,\mathrm{d}s)\coloneqq Q^i(\mathrm{d}v,
\mathrm{d}s)-\rho^i(\mathrm{d}v)\mathrm{d}s$ are random signed measures.
The $Q^i$ are Poisson measures on $\mathbb{R}\times\mathbb{R}_+$ 
with intensity measures $\rho^i(\mathrm{d}v) \mathrm{d}s$
 with $\rho^i(0)=0$ 
and $\int_{\mathbb{R}} (1\wedge v^2) \rho^i(\mathrm{d}v)<\infty$. 
See  for example Applebaum~\cite{Applebaum2004}.
We assume the governing autonomous vector fields 
$V_i\colon\mathbb{R}^N\to\mathbb{R}^N$, $i=0, \ldots, \ell$,  are analytic, and hence a strong solution $Y$ to our {stochastic differential equation} (SDE) exists.
Since the vector fields $V_i$ are analytic, the relevant  information
about the stochastic differential equation is encoded by defining the flowmap as the map 
$\varphi_t$ acting on analytic functions defined as
$(\varphi_t\circ f)(Y_0):=f(Y_t)$.
 Starting point is the 
stochastic Taylor expansion for L{\'e}vy-driven stochastic differential equations. The analyticity of the governing vector fields
allows to obtain a stochastic Taylor expansion in the following separated 
form (Section \ref{WordEncoding}, Eqn.~(\ref{flowmap}))
\begin{equation*}
	\varphi_t = \sum_{w\in \Ab^\ast} I_w(t)D_w.
\end{equation*} 
Each element in this series is the product of an iterated It{\^o} integral $I_w(t)$ with a differential operator $D_w$ that is 
given as a composition of differential operators generated by the vector fields governing the equation. Here the sum is taken over all words over a suitably constructed  alphabet $\Ab$.

This representation allows to encode the flowmap as an element in 
 the complete tensor product of two algebras. One algebra, the \textit{quasi-shuffle algebra}, represents the algebra generated by the iterated It{\^o}
integrals with multiplication (see Section \ref{Qshufflefor}), the other algebra, the \textit{concatenation algebra}, represents 
the algebra generated by the differential operators with composition as product. 
Going one step further, one can use graded duality and properties of the quasi-shuffle Hopf algebra, which is the quasi-shuffle algebra equipped with the coproduct dual to concatenation. 
One thus encodes the flowmap as an element in 
 the complete tensor product of two Hopf algebras in duality.
Such a
separated form and representation was established in \cite{Curry_Levy} and used there to design efficient numerical integrators.

Still one step further, elements in the tensor product of two vector spaces in duality can be understood as linear endomorphisms. This ultimately allows to encode the flowmap by the identity map of the quasi-shuffle Hopf algebra (Prop.~\ref{idrepflow}). This is the point of view we systematically use in the present paper. 
From this point of view, Eqn.~(\ref{flowmap}) below is the expression of the flowmap associated to a \textit{given} linear basis of this Hopf algebra, associated to words over the alphabet $\Ab$. \textit{Any} other choice of a basis will provide \textit{another} expansion of the flowmap associated to a L\'evy-driven stochastic differential equation. 
See also 
\cite{ELMMW2012}, where the (shuffle) convolution algebra 
and the encoding of the flowmap as the identity in this algebra
was established as a natural setting for the design and analysis 
of numerical integration methods for Wiener-driven stochastic differential 
equations.

A novel idea in this paper is to use this approach to define a change of  
basis such that the stochastic processes corresponding to the new basis elements, obtained as linear combinations of products of the original multiple It{\^o} integrals, will follow the classical integration by parts formula rather than It{\^o}'s product rule, that applies to the multiple integrals $I_w$. For continuous processes this change of basis describes 
the transformation from It{\^o} integral to Fisk--Stratonovich integral. 
However, for L{\'e}vy-driven stochastic differential
equations, this transformation does not result in the Fisk--Stratonovich integral, which - in the presence of jumps - does not satisfy the usual integration by parts rule, see for example Theorem V.21 in \cite{Protter}. The new integrals we introduce seem thus interesting on their own, their introduction is another contribution of the paper.

Importantly, in this process we do not change
the Hopf algebra, it is its linear basis that is changed. 
The new basis gives rise to new iterated stochastic integrals, but also to new differential operators. Using Hopf algebraic properties, see \cite{FoissyPatras24} and \cite{PR04}, we show these new differential operators are in fact compositions of vector fields, which we call the
renormalised vector fields. 
 Having performed these steps, we are in familiar territory, allowing us to extend our results in \cite{exp_Lie_series}, Theorem 4.8 and Corollary 4.11, for 
continuous semimartingales to L{\'e}vy-driven stochastic differential equations 
to obtain 
a Chen--Strichartz formula for the logarithm of the flowmap. In 
particular, we show that the exponential series is an 
exponential Lie series.
Furthermore, we derive an explicit representation 
of the renormalised vector fields corresponding to each of the new basis elements. We show that these are given by the components of the pre-Lie Magnus expansion generated by the
original vector fields $V_i$, $i=0,\ldots , \ell$. 

To summarise, we derive the following new results:
\begin{enumerate}
\item we introduce new algebraic ideas and techniques to study flowmaps for stochastic differential equations;
\item we show the exponential series is a Lie serie, also for L{\'e}vy-driven stochastic differential equations;
\item we derive an explicit formula for the Chen--Strichartz series for the logarithm of the flowmap;
\item we obtain a generalisation for L{\'e}vy processes of the It{\^o} to Fisk--Stratonovich transformation (but the new integrals are not Fisk--Stratonovich integrals in the usual sense);
\item we show the vector field components in the 
Chen--Strichartz formula are explicitly given by the components of the
pre-Lie Magnus expansion generated by the original vector
fields governing our stochastic differential equation.  
\end{enumerate}

The article is organised as follows.
In Section \ref{sec:stochTaylor} we derive the separated Taylor expansion for the flowmap. The alphabet and quasi-shuffle algebra are defined in Sections \ref{WordEncoding} and   
\ref{Qshufflefor}. In Section \ref{duality} we provide the algebraic
details for the duality of concatenation Hopf algebra and quasi-shuffle Hopf algebra. The key change-of-basis argument is provided in Section \ref{changeb}, and the new renormalised vector fields are introduced in Section \ref{sec:rvf}, where we derive the explicit expression
of the renormalised vector fields through the pre-Lie Magnus expansion of the vector fields $V_i$, $i=0, \ldots ,\ell$. 
In Section \ref{sec:Chen_Strichartz} we present the Chen--Strichartz formula. We conclude with some remarks in Section \ref{sec:conclusion}.


\section{Stochastic Taylor expansion}
\label{sec:stochTaylor}

Key to describing the flowmap is its stochastic Taylor expansion. We briefly recall the derivation and specific properties for 
L{\'e}vy-driven stochastic differential equations following \cite{Curry_Levy}. It\^o's formula states for a real-valued function $f\in C^{1,2}(\Rb^N)$ we have
\allowdisplaybreaks
\begin{align*}
	f(Y_t) \,=\,& f(Y_0) + 
\sum_{j=1}^N \int_0^t V_0^j(Y_{s-}) 
\partial_{y_j}f(Y_{s-})\, {\mathrm d} s
+ \sum_{i=1}^d \sum_{j=1}^N\int_0^t V_i^j(Y_{s-}) 
\partial_{y_j}f(Y_{s-})\, {\mathrm d}W_s^i\\
& 
+ \sum_{i=d+1}^\ell \sum_{j=1}^N\int_0^t V_i^j(Y_{s-}) 
\partial_{y_j}f(Y_{s-})\, \mathrm{d}J_s^i\\
& 
 + \frac{1}{2}\sum_{i=1}^d \sum_{k, l =1}^N \int_0^t V_i^k(Y_{s-})V_i^l(Y_{s-})  
\partial^2_{y_ky_l}f(Y_{s-})\, {\mathrm d}[W^i , W^i]_s\\
& +  \sum_{0\le s\le t}\biggl( f(Y_{s})-f(Y_{s-})-
\sum_{i=d+1}^\ell \bigl(V_i\cdot \partial\bigr) f(Y_{s-})\Delta J_s^i\biggr),
\end{align*}
where $[\cdot, \cdot]$ denotes the \textit{quadratic covariation} or \textit{square bracket} and where $ \Delta J_s\colon=\bigl(\Delta J_s^{d+1}, \ldots, \Delta J_s^\ell\bigr)^\dag$ is the vector of jumps at time $s$.
See Protter \cite{Protter}, Theorem II.31.
Here we have used that the Wiener processes are independent. The last term in this sum is given by
\begin{align*}
\lefteqn{
\sum_{0\le s\le t}\biggl( f(Y_{s})-f(Y_{s-})-
\sum_{i=d+1}^\ell \bigl(V_i\cdot \partial\bigr) f(Y_{s-})\Delta J_s^i\biggr)}\\
 &\, =  
\sum_{0\le s\le t}  \biggl(f\bigl(Y_{s-}+  \sum_{i=d+1}^\ell V_i(Y_{s-})\Delta J^i_s\bigr)-f(Y_{s-})-
\sum_{i=d+1}^\ell \bigl(V_i\cdot \partial\bigr) f(Y_{s-})\Delta J_s^i\biggr)\\
&\, = \sum_{0\le s\le t} \biggl(F(Y_{s-}, \Delta J_s) - F(Y_{s-}, 0)-
 \sum_{i=d+1}^\ell \bigl(V_i\cdot \partial\bigr) f(Y_{s-})\Delta J_s^i\biggr),
\end{align*}
where $F$ denotes the shifted function 
\[
	F(y, v):=  f\bigl(y+\sum_{i=d+1}^\ell v_i V_i(y)\bigr) - f(y)
\]
defined for $(y, v)\in {\mathbb R}^N\times {\mathbb R}^{\ell-d}$ with the convention $v=(v_{d+1},\dots,v_l)$ for indices. Note that we have normalised $F$ so that $F(Y_{s-}, 0)=0$ but have kept the term in the formulas for greater clarity before using Taylor expansions.

The Taylor expansion of $F$ in $v$ is given by
\begin{equation*}
	F(y,v)=\sum_{\gamma} \frac{1}{\gamma !}\partial^{\gamma} F(y, 0)\, v^\gamma,
\end{equation*}
where the summation is over all multi-indices $\gamma=(\gamma_{d+1}, \ldots ,\gamma_\ell)$. Here we define $v^\gamma:=\Pi_{i=d+1}^\ell v_i^{\gamma_i}$ and similarly  $\partial^\gamma:=\Pi_{i=d+1}^\ell \partial_{i}^{\gamma_i}$. Since independent L{\'e}vy processes do not jump at the same time, i.e.~$\sum_{s\le t} \Delta J_s^i\Delta J_s^j 
\equiv 0$ for $i\not=j$, the mixed product terms in the Taylor expansion for $F$ are zero when $F$ is evaluated in $v=\Delta J_s$, and hence we have
\begin{align*}
	F(y,v)\big|_{v=\Delta J_s}
	\, = \, &
	\sum_{i=d+1}^\ell \sum_{m\ge 1} \frac{1}{m!}\partial_i^{m} F(y, 0) v_i^{m}\big|_{v=\Delta J_s}\\
	\,=\, & \sum_{i=d+1}^\ell \sum_{m\ge 1}
	\sum_{j_1 , \ldots , j_m = 1}^N \frac{1}{m!} v_i^m V_{i}^{j_1}(y)\cdots V_{i}^{j_m}(y) \partial^m_{j_1 \cdots j_m}\circ f(y)\big|_{v=\Delta J_s}.
\end{align*}
We define the differential operators $D_{i^{(m)}}$ for $m \geq 1$ as
\begin{align}
\label{eq:diffopera}
	D_{i^{(m)}} 
	& \coloneqq\sum_{j_1 , \ldots , j_m = 1}^N \frac{1}{m!} V_{i}^{j_1}\cdots V_{i}^{j_m} \partial^m_{j_1 \cdots j_m}. 
\end{align}
Equivalently, we have  $D_{i^{(m)}} = \sum_{|\alpha|=m} \frac{1}{\alpha!} V_i^\alpha \partial^\alpha$, where the sum is over all multi-indices $\alpha$. Since $D_{i^{(1)}}= V_i\cdot \partial$, the last term in the It{\^o} formula for $f$ can be expressed more distinctly  as follows
\begin{align*}
\lefteqn{\sum_{0\le s\le t} \biggl(F(Y_{s-}, \Delta J_s) - F(Y_{s-}, 0)-
 \sum_{i=d+1}^\ell \bigl(V_i\cdot \partial\bigr) \circ f(Y_{s-})\Delta J_s^i\biggr) } \\
& = \,   
\sum_{i=d+1}^\ell \sum_{m\geq 2}\sum_{0\le s\le t} 
D_{i^{(m)}}\circ f(Y_{s-}) \bigl(\Delta J_s^i\bigr)^m \\
& = \,  
\sum_{i=d+1}^\ell \sum_{m\geq 2} \int_0^t
D_{i^{(m)}}\circ f(Y_{s-}) {\mathrm d} [J^i]_s^{(m)}.
\end{align*}
Here $[J^i]_s^{(m)}$  is the $m$-th power bracket  of  $J^i$, defined as follows, see \cite{Jamshidian}. 

\begin{definition}[Power bracket] 
For a semimartingale $X$, we define $[X]^{(1)}:=X$, and we define the \textit{power bracket} for $m\ge 2$ by $[X]^{(m)}=[X, [X]^{(m-1)}]$. 
\end{definition}

For $m\ge 3$ the $m$-bracket is given by $[X]^{(m)}= \sum_s \bigl(\Delta X_s\bigr)^m$. This is also known as the \textit{power jump process} (see \cite{Nualart2000}). Since the $J^i$ are purely discontinuous martingales, we have for $m\ge 2$ 
\[
	[J^i]^{(m)}_t=
	\sum_{0\le s \le t} \bigl(\Delta J^i_s\bigr)^m = \int_0^t \int_{\mathbb R} v^m \,Q^i({\mathrm d}v, {\mathrm d}s).
\]
In conclusion, It\^o's Formula yields for analytic functions $f$
\begin{align*}
	f(Y_t) \,=\,& f(Y_0) + 
	\sum_{j=1}^N \int_0^t V_0^j(Y_{s-}) 
	\partial_{y_j}f(Y_{s-})\, \mathrm{d} s
	+ \sum_{i=1}^d \sum_{j=1}^N\int_0^t V_i^j(Y_{s-}) 
	\partial_{y_j}f(Y_{s-})\, \mathrm{d}W_s^i\\
	& 
	+ \sum_{i=d+1}^\ell \sum_{j=1}^N\int_0^t V_i^j(Y_{s-}) 
	\partial_{y_j}f(Y_{s-})\, \mathrm{d}J_s^i\\
	& 
 	+ \frac{1}{2}\sum_{i=1}^d \sum_{k, l =1}^N \int_0^t V_i^k(Y_{s-})V_i^l(Y_{s-})  
	\partial^2_{y_ky_l}f(Y_{s-})\, {\mathrm d}[W^i , W^i]_s\\
	& + \sum_{i=d+1}^\ell  \sum_{m\geq2} \int_0^t
	D_{i^{(m)}}\circ f(Y_{s-}) {\mathrm d}[J^i]^{(m)}_s.\\
\end{align*}


\section{Word encoding}
\label{WordEncoding}

This section and the next one provide the algebraic background for what is to come. Since the vector fields governing our 
stochastic equation are analytic, we define 
the flowmap $\varphi_t$ as the random map prescribing the transport of the initial condition $f(Y_0)$ to the solution $f(Y_t)$ at time $t>0$ for analytic functions $f:{\mathbb R}^N \to {\mathbb R}$.

\smallskip

It\^o's Formula shows that the constituting factors of the flowmap are given by the driving processes (time $t$, the Wiener processes and the purely discontinuous martingales), and the power brackets of the latter, as well as the governing vector fields $V_i$ and the differential operators $D_{i^{(m)}}$, for $m\ge 2$. It is known, see \cite{quasi_shuffle}, that the algebra of repeated integrals of semimartingales is a quasi-shuffle algebra. In \cite{Curry_Levy} it was shown that the flowmap can be represented in a Hopf algebra framework. Proceeding similarly, we define the countable alphabet ${\mathbb A}$ as follows:
\begin{equation*}
	{\mathbb A}:=\{ 0, 1, \, \ldots \, ,\, \ell\}
	\cup \{ i^{(2)} \, :\, i=1, \ldots , d\} 
	\cup \{ i^{(m)}:\, i=d+1, \ldots, \ell,\, m\ge 2\}.
\end{equation*}
For notational convenience we will also write $i^{(1)}=i$, for $i=0, \ldots, \ell$. 
We set 
\[
\begin{array}{lll}
	I_0 & :=t & \\
	I_i &  := W_i & \mbox{ for } i=1, \ldots , d\\[0.1cm] 
	I_i & := J_i & \mbox{ for } i=d+1, \ldots, \ell\\[0.1cm]
	I_{i^{(2)}} & :=  [W^i, W^i] & \mbox{ for } i=1, \ldots, d\\[0.1cm]
	I_{i^{(m)}} & :=[J^i]^{(m)} & \mbox{ for } i=d+1, \ldots, \ell \mbox{ and } m\ge 2,
\end{array}
\]
and we set
\[
\begin{array}{lll}
	D_i & := V_i\cdot \partial & \mbox{ for } i=0, \ldots ,\ell\\[0.1cm]
	D_{i^{(2)}} & := \frac{1}{2} \sum_{j, k=1}^N V_i^jV_i^k \partial^2_{y_jy_k} &\mbox{ for } i=1, \ldots, d.
\end{array}
\]
For $i= d+1, \ldots, \ell$ and $m\ge 2$
we recall the definition of the differential operators:
\[
	D_{i^{(m)}} = \sum_{|\alpha|=m} \frac{1}{\alpha!} V_i^\alpha \partial^\alpha.
\]
We have thus established a one-to-one correspondence between   
the multiple It{\^o} integrals and the alphabet $\Ab$ as well as
a one-to-one correspondence between the differential operators 
and the alphabet $\Ab$.

\begin{remark}
\begin{enumerate}
\item We emphasise that the operators $D_a$, $a\in {\mathbb A}$, are differential operators.  
\item The alphabet ${\mathbb A}$ is countably infinite.
\item While $[W^i, W^i]_t=t$, for $1\le i \le d$, the encoding in the alphabet ensures the information from which Wiener process the bracket $[W^i, W^i]$ is generated will be retained. 
\end{enumerate}
\end{remark}

With this encoding, It\^o's Formula simplifies to 
\[
	f(Y_t)= f(Y_0)+ \sum_{i\in {\mathbb A}} \int_0^t D_i \circ f(Y_{s-}) {\mathrm d} I_i(s).
\]
We denote by $\Ab^\ast$ the free monoid of words on the alphabet $\Ab$. We denote by $\mathbf 1$ its unit, the empty word, to distinguish it from the letter $1\in \mathbb A$. Using iteratively It\^o's Formula yields the stochastic Taylor expansion (see, for instance, \cite{Kloeden_Platen} and 
\cite{Bruti_Liberati_Platen})
\begin{equation*}
\label{Taylor_expansion}
	f(Y_t) = \sum_{w\in\Ab^{\ast}} I_w(t)D_w\circ f(Y_{0}) .
\end{equation*}
Here, for a word $w=a_1\cdots a_n$ the term $I_w(t)$ denotes the multiple It\^o integral 
\begin{equation*}
	I_w(t)=\int_{0\leqslant\tau_{1}\leqslant\cdots\leqslant\tau_{n}\leqslant t} 
	\,\mathrm{d} I_{a_1}(\tau_1)\cdots\,\mathrm{d} I_{a_n}({\tau_n}),
\end{equation*}
and  $D_w$ denotes the composition of the differential operators $D_{a_i}$, $0\le i \le n$,
\begin{equation*}
	D_w= D_{a_1}\circ\cdots\circ D_{a_n}.
\end{equation*}
Hence the flowmap $\varphi_t$ can be expressed as
\begin{equation}
\label{flowmap}
	\varphi_t = \sum_{w\in \Ab^\ast} I_w(t)D_w.
\end{equation}

This representation of the flowmap has the advantage that the stochastic information given by the multiple integrals 
$I_w$ and the geometric information given by the differential operators $D_w$ are separated. See \cite{Curry_Levy}, where this representation was derived and utilised to design efficient integrators for L{\'e}vy stochastic differential
equations.


\section{Quasi-shuffle formalism}
\label{Qshufflefor}

The product of It{\^o} integrals can be encoded as a quasi-shuffle product on an appropriately chosen alphabet, while the composition of vector fields can be encoded as a concatenation product. This, together with the word encoding of the flowmap, allows to further abstract symbolically the latter as a complete tensor product in the tensor product of two Hopf algebras \cite{quasi_shuffle}.

Following a very similar approach, we choose here to encode the flowmap by a linear endomorphism of a Hopf algebra.
See also \cite{Curry_Levy}, where the convolution (shuffle) algebra of linear endomorphisms was shown to be the natural setting for the design of numerical integration schemes.
The object encoding the flowmap is the identity map of a Hopf algebra of words. However, as we shall see, this simple encoding is extremely powerful. Indeed, it allows in particular to deduce subtle properties of the flowmap for L\'evy-driven stochastic differential equations from relatively standard algebraic constructions and basis change arguments. The encoding is described in this section and the forthcoming one. Later sections will be devoted to applications.

Let $\Rb\la\Ab\ra$ denote the non-commutative polynomial algebra over $\Ab$ generated by monomials (or words) that can be constructed from the alphabet $\Ab$. Equivalently, $\Rb\la\Ab\ra$ is the linear span of $\Ab^\ast$, with $\mathbf 1$ as its algebra unit. We encode the quadratic covariation processes of the L{\'e}vy  processes driving the 
stochastic differential equation as a product  of letters in $\Ab$, which we will also denote by $[\cdot, \cdot]$. 

\begin{definition}[Bracket product on $\Ab$]
\label{def:bracketprod}
We define a commutative associative product $[\cdot , \cdot ]$  on the vector space over ${\mathbb R}$ generated by the alphabet $\Ab$ as follows:
\begin{enumerate}
\item if $i=0$ then $[i, j]:=\mathbf{0}$ for all $j\in \Ab$, 
\item 
$[i, i]:=i^{(2)}$ and $[i, i^{(2)}]:=\mathbf{0}$ for all $i=1, \ldots , d$, 
\item $[i^{(l)}, i^{(k)} ]:=i^{(l+k)}$ for all $l, k\ge 1$ and for all  $i=d+1, \ldots ,\ell$, and
\item $[i^{(l)}, j^{(k)} ]:=\mathbf{0}$ for $i\not=j$ and all $l, k\ge 1$.
\end{enumerate}
We extend this definition to the vector space generated by
 $\Ab$ linearily. Here $\mathbf{0}$ denotes
the zero in $\mathbb R$ to distinguish it from the letter $0$ in $\Ab$.
For  letters $a_i$ in $\Ab$, iterated products are denoted by 
$$
	[a_1,\dots,a_n]:=[a_1,[a_2,\dots,a_n]].
$$
\end{definition}

We define a grading $g$ on $\Rb\la\Ab\ra$ as follows, see also \cite{quasi_shuffle}.

\begin{definition}[Grading]
We assign the letter $0$ the grading $g(0)=1$, and for  $i=1, \ldots, \ell$, and $k \ge 1$ we set $g\bigl(i^{(k)}\bigr)  =  k$. For a word $w\in \Rb\la\Ab\ra$ we set $g(w)$ to be the sum of the gradings of each of its letters.  
\end{definition}

\begin{remark}\label{rm:bracket} 
\begin{enumerate}
\item
The bracket product on $\Ab$ encodes the quadratic covariation of the L{\'e}vy processes governing our stochastic differential equation. Its associativity, which is easily checked, reflects associativity properties of quadratic covariation calculus. The fourth property in Definition \ref{def:bracketprod} reflects the property of independent L\'evy processes that with probability $1$ jumps do not occur at the same time. In other words, we have
\[
	\sum_{s\le t}\Delta J^i_s\,\Delta J^j_s\equiv 0 \mbox{ for } i\not=j.
\]
\item
The properties in Definition \ref{def:bracketprod} imply that the set
\[
	\bigl\{j, k: [i^{(j)}, i^{(k)}]=i^{(m)}\bigr\}=\bigl\{j, k: j+k=m\bigr\}
\]
is finite for all $m\ge 1$. More generally, if $w=i^{(j_1)}\cdots i^{(j_n)}$ is a word such that $[i^{(j_1)}, \ldots , i^{(j_n)}]=i^{(m)}$, then $j_1 + \cdots + j_n=m$. 
\item We note that for each grade $k\ge 1$ there are finitely many letters of grade $k$. Hence $\Ab$ equipped with the grading $g$ is locally finite (that is, each graded component is a finite set). Furthermore, the grading $g$ is additive on the bracket of two letters: for  $[i^{(j)}, i^{(k)}]\not={\mathbf 0}$, we have $g\bigl([i^{(j)}, i^{(k)}]\bigr)=j+k=g\bigl(i^{(j)}\bigr) +g\bigl( i^{(k)}\bigr)$. 
\item Alternatively, and without affecting the results, we could have defined the bracket product on $\Ab \,\cup\, {\mathbf 0}$, resulting in a semigroup structure and thus providing a slightly different angle on the construction. Note that the semigroup $\Ab \,\cup\, {\mathbf 0}$ is not graded due for example to the first relation in Definition \ref{def:bracketprod} and the fact that $\mathbf 0$ is necessarily of degree 0. However, when the product of two letters is nonzero, it is compatible with the grading and this allows to automatically apply to $\Rb\la\Ab\ra$ the constructions in \cite{Hoffman2000}.
\end{enumerate}
\end{remark}

\begin{definition}[Quasi-shuffle product]
The commutative quasi-shuffle product $\star$ on $\Rb\la\Ab\ra$  is  inductively defined by
\begin{align}
\label{qshprod}
\begin{aligned}
	u\,& \star\, {\mathbf 1}
	= {\mathbf 1}\,\star\, u
	=u, \\
	ua \star vb & 
	=(u \star vb)a + (ua \star v)b + (u \star v)[a,b],
\end{aligned}
\end{align}
where $u$ and $v$ are words and $a$ and $b$ are letters, and where
 `$ {\mathbf 1}$' is the empty word.
\end{definition}

By \cite{Hoffman2000,NR}, the quasi-shuffle product \eqref{qshprod} equips $\Rb\la\Ab\ra$ with an associative and commutative graded algebra structure.

The following proposition accounts for the role of quasi-shuffles in stochastic calculus. See, e.g.,~\cite{Curry_Levy} for further  references on the rules of stochastic calculus applying to L\'evy 
stochastic differential equations.

\begin{proposition}[Word-to-integral map]\label{def:wordmaps}
For words $w\in \Ab^\ast$, we define
 the word-to-integral map $\mu$ by
\[
	\mu (w):=I_w
\]
and extend this definition to $\Rb\la\Ab\ra$ linearily.  The map $\mu$ is an algebra morphism from the quasi-shuffle algebra $\bigl(\Rb\la\Ab\ra, \star)$ to the algebra generated by the multiple integrals $I_w$, that is, for words $w,w'$ in $\Ab^\ast$ we have
$$
	\mu(w \star w')=I_wI_{w'}.
$$
\end{proposition}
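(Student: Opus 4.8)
The plan is to prove that $\mu$ is an algebra morphism by induction on the total length (or the grading) of the pair of words $w, w'$, using the inductive definition of the quasi-shuffle product in \eqref{qshprod} together with the product rule for iterated It\^o integrals of semimartingales. The base case is immediate: if either word is the empty word $\mathbf 1$, then $I_{\mathbf 1}(t)=1$ and $\mu(\mathbf 1 \star w)=\mu(w)=I_w$, so the identity holds trivially. For the inductive step I would write $w=ua$ and $w'=vb$ with $a,b\in\Ab$ and $u,v\in\Ab^\ast$, and show that the iterated It\^o integral $I_{ua}I_{vb}$ satisfies exactly the three-term recursion
\[
	I_{ua}\,I_{vb} = I_{(u\star vb)a} + I_{(ua\star v)b} + I_{(u\star v)[a,b]},
\]
which by the induction hypothesis equals $\mu\bigl((u\star vb)a + (ua\star v)b + (u\star v)[a,b]\bigr) = \mu(ua\star vb)$.

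The heart of the argument, and the step I expect to be the main obstacle, is establishing that recursion at the level of stochastic integrals. The idea is to apply the integration-by-parts (product) formula for semimartingales: for two semimartingales $X$ and $Y$ one has $X_tY_t = \int_0^t X_{s-}\,\mathrm dY_s + \int_0^t Y_{s-}\,\mathrm dX_s + [X,Y]_t$. Taking $X_t=I_{ua}(t)$ and $Y_t=I_{vb}(t)$, one has $\mathrm dX_s = I_u(s-)\,\mathrm dI_a(s)$ and $\mathrm dY_s = I_v(s-)\,\mathrm dI_b(s)$ by definition of the iterated integrals. The first two terms then unfold, after using the induction hypothesis inside the integrals, into $I_{(u\star vb)a}$ and $I_{(ua\star v)b}$ respectively. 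The quadratic covariation term $[X,Y]_t = \int_0^t I_u(s-)I_v(s-)\,\mathrm d[I_a,I_b]_s$ is where the bracket product on $\Ab$ enters: by the very construction of the alphabet and of Definition \ref{def:bracketprod}, the process $[I_a,I_b]$ is precisely $I_{[a,b]}$ — this is true because $[W^i,W^i]=I_{i^{(2)}}$, $[J^i,J^i]^{\cdots}$ or more generally the covariations of the power-jump processes reproduce $[J^i]^{(l+k)}=I_{i^{(l+k)}}$, and all cross-covariations between distinct independent drivers vanish, matching $[a,b]=\mathbf 0$. Hence $[X,Y]_t = \int_0^t I_u(s-)I_v(s-)\,\mathrm dI_{[a,b]}(s) = I_{(u\star v)[a,b]}$, again after applying the induction hypothesis.

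A point requiring care is the bookkeeping of cases in Definition \ref{def:bracketprod}: one must check that each of the four cases of the bracket product corresponds correctly to a covariation identity among the driving processes — in particular the degenerate cases $[0,j]=\mathbf 0$ (time has zero quadratic variation against everything), $[i,i^{(2)}]=\mathbf 0$ for $i\le d$ (the quadratic variation of a Wiener process is a finite-variation process, so it has zero covariation with $W^i$), and $[i^{(l)},j^{(k)}]=\mathbf 0$ for $i\ne j$ (independence of the L\'evy drivers, Remark \ref{rm:bracket}). These are exactly the stochastic-calculus facts for L\'evy-driven equations recalled in Section \ref{sec:stochTaylor}. Once these correspondences are in place, the recursion matches \eqref{qshprod} term by term, and since $\mu$ is linear by construction the morphism property extends from words to all of $\Rb\la\Ab\ra$, completing the induction. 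I would also remark that associativity of the quasi-shuffle product (cited from \cite{Hoffman2000,NR}) together with associativity of the integral product makes the statement consistent for products of more than two words, but strictly only the two-word case is asserted.
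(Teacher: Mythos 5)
Your proof is correct and follows essentially the same route the paper relies on (it defers to \cite{quasi_shuffle} and \cite{Curry_Levy} rather than spelling it out): induction on word length, matching the three terms of the It\^o product rule for semimartingales to the three terms of the quasi-shuffle recursion \eqref{qshprod}, with the bracket product on $\Ab$ encoding the quadratic covariations of the drivers. The case bookkeeping you flag is exactly the content of Remark \ref{rm:bracket}, so nothing is missing.
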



\section{Hopf algebra duality}
\label{duality}

We remark that the process of integration is dual to differentiation. We encode this general fact by introducing a dual alphabet $\overline\Ab$. Letters in and words over the alphabet $\overline\Ab$ are denoted $\overline a$, and 
$\overline\omega=\overline a_1\cdots \overline a_n$, respectively, where $a$ is a letter in $\Ab$ and $\omega=a_1\cdots a_n$ is a word in $\Ab^\ast$. We make an exception for the empty word that we denote with the same symbol $\mathbf 1$ in $\Ab^\ast$ and in $\overline\Ab^\ast$.

Recall that given two words $a_1\cdots a_n$ and $b_1\cdots b_m$, their concatenation is the word  $a_1\cdots a_n b_1\cdots b_m$. The vector space $\Rb\la\overline\Ab\ra$ equipped with the concatenation product of words, that we denote $\cdot$ or $\mathrm{conc}$, is the free associative algebra generated by $\Ab$.

\begin{definition}[Word-to-differential operator map]
\label{def:wordmaps2}
For words $\overline w\in \overline\Ab^\ast$, we define the word-to-differential map $\bar{\mu}$ by
\[
	\bar{\mu} (\overline w):=D_w.
\]
We extend this definition to $\Rb\la\overline\Ab\ra$ linearly. 
\end{definition}

As the product of differential operators is non-commutative and associative, and as $\Rb\la\overline\Ab\ra$ is free, we have the following result.

\begin{proposition}
The map $\bar{\mu}$  is an algebra morphism from the algebra $\bigl(\Rb\la\overline\Ab\ra, \mathrm{conc})$
to the composition algebra generated by the differential operators $D_w$.
\end{proposition}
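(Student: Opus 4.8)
The statement to prove is that $\bar\mu$ is an algebra morphism from $(\Rb\la\overline\Ab\ra,\mathrm{conc})$ to the composition algebra generated by the differential operators $D_w$. The plan is to verify this by a direct unwinding of definitions, exploiting the fact that $\Rb\la\overline\Ab\ra$ is the \emph{free} associative algebra on $\overline\Ab$, so a linear map into an associative algebra that respects products on generators (or, here, on a spanning set of words) is automatically an algebra morphism; the only genuine content is checking the multiplicativity identity $\bar\mu(\overline w \cdot \overline w') = \bar\mu(\overline w)\circ\bar\mu(\overline w')$ on words.

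First I would recall that for a word $\overline w = \overline a_1\cdots\overline a_n$ the map is defined by $\bar\mu(\overline w) = D_w = D_{a_1}\circ\cdots\circ D_{a_n}$, and that $\bar\mu(\mathbf 1) = \id$ since the empty composition of differential operators is the identity operator, so $\bar\mu$ is unital. Next, given two words $\overline w = \overline a_1\cdots\overline a_n$ and $\overline w' = \overline b_1\cdots\overline b_m$, their concatenation is $\overline w\cdot\overline w' = \overline a_1\cdots\overline a_n\,\overline b_1\cdots\overline b_m$, whence by definition
\[
	\bar\mu(\overline w\cdot\overline w') = D_{a_1}\circ\cdots\circ D_{a_n}\circ D_{b_1}\circ\cdots\circ D_{b_m} = (D_{a_1}\circ\cdots\circ D_{a_n})\circ(D_{b_1}\circ\cdots\circ D_{b_m}) = \bar\mu(\overline w)\circ\bar\mu(\overline w'),
\]
using only associativity of composition of operators to regroup the composition string. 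Since both sides are linear in each argument and words form a linear basis of $\Rb\la\overline\Ab\ra$, bilinear extension gives $\bar\mu(P\cdot Q) = \bar\mu(P)\circ\bar\mu(Q)$ for all $P,Q\in\Rb\la\overline\Ab\ra$. This shows $\bar\mu$ is an algebra morphism onto its image, which is by construction the composition algebra generated by the $D_w$.

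There is essentially no obstacle here: the argument is a formal consequence of the freeness of the concatenation algebra and the fact that $D_w$ was \emph{defined} as the iterated composition $D_{a_1}\circ\cdots\circ D_{a_n}$, so the only thing being used is that a word and its factorisation into letters are compatible with concatenation in the obvious way. The one point deserving a word of care — and the closest thing to a subtlety — is well-definedness: one should note that the expression $D_{a_1}\circ\cdots\circ D_{a_n}$ does not depend on any bracketing of the composition (again by associativity), and that the letters $\overline a_i$ genuinely index distinct well-defined differential operators $D_{a_i}$ via the one-to-one correspondence between $\Ab$ and the operators $D_{i^{(m)}}$ established in Section \ref{WordEncoding}; both are immediate. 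Hence the proposition follows.
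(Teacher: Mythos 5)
Your proof is correct and matches the paper's reasoning, which disposes of this proposition in a single sentence by appealing to exactly the same two facts you use: freeness of $(\Rb\la\overline\Ab\ra,\mathrm{conc})$ and the definition of $D_w$ as the iterated composition $D_{a_1}\circ\cdots\circ D_{a_n}$, so that concatenation of words maps to composition of operators by construction. Nothing further is needed.
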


Recall that the graded dual of a graded vector space $V=\bigoplus_{n\in\Nb}V_n$ is the graded vector space $V^\ast=\bigoplus_{n\in\Nb}V^\ast_n$. When the $V_i$ are finite dimensional and equipped with a basis $B_i$, the union of the dual bases $B_i^\ast$ of the $V_i^\ast$ forms a basis of $V^\ast$.

The sets of words $\Ab^\ast$ and $\overline\Ab^\ast$ form bases of 
$\Rb\la\Ab\ra$ and $\Rb\la\overline\Ab\ra$, respectively. 
Setting $\la \omega,\overline\omega'\ra:=\delta_{\omega=\omega'}$ puts the two bases and the corresponding vector spaces in graded duality. Note that they are not in duality, as they are infinite dimensional. Hence, by ``dual" we will therefore always refer to graded duality from now on when dealing with graded infinite dimensional spaces.

One can then take advantage of duality to define the deconcatenation coproduct, denoted $\Delta$, on $\Rb\la\Ab\ra$ as the dual of the concatenation product on $\Rb\la\overline\Ab\ra$:
\begin{align}
\label{eq:adjqshcoprod1}
	\Delta(w) 
	&= \sum_{w_1,w_2 \in \Ab^*} \langle w, \overline w_1\cdot \overline w_2 \rangle w_1 \otimes w_2.
\end{align}
Equivalently, for the word $w=a_1\cdots a_n$, we have
$$
	\Delta(w)=\sum\limits_{i=0}^n a_1\cdots a_i\otimes a_{i+1}\cdots a_n,
$$
with the convention $a_1\cdots a_0:=\mathbf 1$ and $a_{n+1}\cdots a_n:=\mathbf 1$.

The following classical result seems to have appeared first in \cite{NR}. See also \cite{Hoffman2000}. We refer to \cite{CP21} for general definitions and properties of Hopf algebras that we implicitly or explicitly use below.

\begin{proposition}
\label{prop:quasishuffleHoopf}
The graded vector space $\Rb\la\Ab\ra$ equipped with the quasi-shuffle product \eqref{qshprod} and deconcatenation coproduct \eqref{eq:adjqshcoprod1} forms a graded Hopf algebra, denoted  $\bigl(\Rb \la\Ab\ra, \star,\Delta\bigr)$.
\end{proposition}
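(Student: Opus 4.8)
The plan is to verify that $\bigl(\Rb\la\Ab\ra,\star,\Delta\bigr)$ satisfies the axioms of a graded Hopf algebra: it is a graded algebra, a graded coalgebra, the two structures are compatible (i.e.\ $\Delta$ and the counit are algebra morphisms), and there is a well-defined antipode. Associativity, commutativity and the fact that $\star$ respects the grading have already been recorded (see the discussion after \eqref{qshprod} and Remark~\ref{rm:bracket}(3), where additivity of $g$ on nonzero brackets is what makes $\star$ homogeneous). Coassociativity of the deconcatenation coproduct $\Delta$ in \eqref{eq:adjqshcoprod1} is classical and immediate from the explicit formula $\Delta(a_1\cdots a_n)=\sum_{i=0}^n a_1\cdots a_i\otimes a_{i+1}\cdots a_n$; the counit $\varepsilon$ sends the empty word $\mathbf 1$ to $1$ and every nonempty word to $0$. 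So the real content is the bialgebra compatibility, namely that
\[
	\Delta(w\star w')=\Delta(w)\star\Delta(w'),
\]
where on the right the product is the induced one on $\Rb\la\Ab\ra\otimes\Rb\la\Ab\ra$.

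First I would prove this compatibility. The cleanest route is the duality already set up in Section~\ref{duality}: $\star$ on $\Rb\la\Ab\ra$ is by construction dual to a coproduct $\bar\Delta$ on $\Rb\la\overline\Ab\ra$ obtained by dualising \eqref{qshprod} letter by letter, while $\Delta$ is dual to concatenation on $\Rb\la\overline\Ab\ra$. Hence the identity $\Delta(w\star w')=\Delta(w)\star\Delta(w')$ is, by transposition, exactly the statement that $\bar\Delta$ is an algebra morphism for concatenation on $\Rb\la\overline\Ab\ra$ — and that in turn can be checked directly by a short induction on word length using the recursive definition \eqref{qshprod}, or simply quoted from \cite{Hoffman2000,NR}, since the only property of the bracket product used is associativity and commutativity (Definition~\ref{def:bracketprod}), both of which hold here. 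An alternative, fully self-contained argument is a direct induction on $g(w)+g(w')$ (or on the total number of letters) using \eqref{qshprod} and the explicit deconcatenation formula; the base case is trivial and the inductive step splits $w\star w'$ via its three terms $ (u\star vb)a+(ua\star v)b+(u\star v)[a,b]$ and matches each against the corresponding pieces of $\Delta(w)\star\Delta(w')$.

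Second, once we have a graded bialgebra, the antipode comes for free: $\Rb\la\Ab\ra$ is connected graded (the degree-$0$ part is $\Rb\mathbf 1$, since every letter has grading $\ge 1$ and there are only finitely many letters in each positive degree, by Remark~\ref{rm:bracket}(3)), and every connected graded bialgebra is automatically a Hopf algebra — the antipode is defined degree by degree by the standard recursion $S(\mathbf 1)=\mathbf 1$, $S(w)=-w-\sum S(w_1)\star w_2$ over the nontrivial part of $\Delta(w)$. I would invoke the classical connectedness criterion (e.g.\ from \cite{CP21}) rather than constructing $S$ by hand.

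The main obstacle — and it is a mild one — is bookkeeping around the extra term $(u\star v)[a,b]$ in the quasi-shuffle product: one must be careful that this term is compatible with the grading (which is where Remark~\ref{rm:bracket}(3) is essential, since $g([a,b])=g(a)+g(b)$ whenever $[a,b]\ne\mathbf 0$, and the term simply vanishes otherwise) and that the inductive matching against $\Delta(w)\star\Delta(w')$ correctly accounts for the ways a ``merged'' letter $[a,b]$ can be split by deconcatenation — it cannot be split, so it always lands entirely on one side, which is exactly what the recursion produces. Everything else is routine, and where convenient I would simply cite \cite{Hoffman2000,NR} for the statement that a commutative associative ``bracket'' on the alphabet yields a (connected graded, hence Hopf) quasi-shuffle algebra, noting that our Definition~\ref{def:bracketprod} fits that framework (cf.\ Remark~\ref{rm:bracket}(4)).
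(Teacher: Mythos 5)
Your proposal is correct and matches the paper's treatment: the paper simply invokes the classical result of Newman--Radford and Hoffman \cite{NR,Hoffman2000}, and your sketch is precisely the standard verification from those references (bialgebra compatibility by induction on the quasi-shuffle recursion, antipode from connectedness of the grading). The two points you flag as needing care --- additivity of the grading on nonzero brackets and the fact that a merged letter $[a,b]$ cannot be split by deconcatenation --- are indeed the only places where the specific structure of $\Ab$ enters, and both are handled correctly.
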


One can also define the de-quasi-shuffle coproduct   denoted $\delta$ on $\Rb\la\overline\Ab\ra$ as the dual of the quasi-shuffle product on $\Rb\la\Ab\ra$. For a letter $a \in \Ab$ we have
\begin{equation}
\label{eq:adjqshcoprod2}
	\delta(\overline a) 
	= \overline a\otimes \mathbf 1+\mathbf 1\otimes \overline a
	+ \sum_{a_1,a_2 \in \Ab} \langle a, \overline{ [a_1,a_2]} \rangle \overline a_1 \otimes \overline a_2.
\end{equation}
For a word $\overline w=\overline a_1\cdots \overline a_n$, the coproduct is obtained as
$$
	\delta(\overline w):=\delta(\overline a_1)\cdot \cdots\cdot \delta(\overline a_n),
$$
where $\cdot$ now denotes the product in $\Rb\la\overline\Ab\ra\otimes \Rb\la\overline\Ab\ra$ (the tensor product of the concatenation products on each copy of $\Rb\la\overline\Ab\ra$).

By local finiteness of the graded monoid $\overline\Ab^\ast$ or by observing directly that each letter de-quasi-shuffles in a finite number of ways, the de-quasi-shuffling operator $\delta$ is well-defined.

\begin{remark}
Going back to Proposition \ref{prop:quasishuffleHoopf}, it follows from its definition that the quasi-shuffle Hopf algebra $\bigl(\Rb \la\Ab\ra, \star,\Delta\bigr)$ has as its (graded) dual the concatenation Hopf algebra $\bigl(\Rb \la\overline\Ab\ra, \mathrm{conc}, \delta\bigr)$ with the de-quasi-shuffle coproduct \eqref{eq:adjqshcoprod2}. 
\end{remark}

Notice now that the identity map denoted $\mathrm{Id}$ of $\Rb \la\Ab\ra$ can be represented in the basis of words $\Ab^\ast$ as
$$
	\mathrm{Id}=\sum\limits_{w\in \Ab^\ast}w\otimes \overline w.
$$
One can think of this expression as an infinite diagonal matrix whose entries are indexed by pairs of words in $\Ab^\ast$.
Here we use the convention that the action of a tensor $w\otimes \overline w'$ on $\Rb \la\Ab\ra$ is given by
$$
	(w\otimes \overline w')(w''):=\la w'',\overline w'\ra w.
$$

We can now represent the flowmap $\varphi_t$ as follows. See also \cite{Curry_Levy}  and \cite{exp_Lie_series} for similar expansions for L{\'e}vy stochastic differential equations and for the case of stochastic differential equations driven by continuous semimartingales, respectively.

\begin{proposition}
The flowmap $\varphi$ has the representation
\begin{equation}
	\varphi
	=\mu\otimes\bar{\mu}(\mathrm{Id})
	=\mu\otimes\bar{\mu} \biggl(\sum\limits_{w\in \Ab^\ast} w\otimes \overline w\biggr)
\end{equation}
with multiple It{\^o} integrals on the left ($\Ab^\ast$ terms) and composition of differential operators on the right ($\overline\Ab^\ast$ terms). 
\end{proposition}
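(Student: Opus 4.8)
The plan is to evaluate $\mu\otimes\bar{\mu}$ on the word representation of the identity endomorphism and to match the result term by term with the separated stochastic Taylor expansion~(\ref{flowmap}). First I would recall from the discussion preceding the statement that, in the basis of words, $\mathrm{Id}=\sum_{w\in\Ab^\ast}w\otimes\overline w$, and that by Definitions~\ref{def:wordmaps} and~\ref{def:wordmaps2} the maps $\mu$ and $\bar{\mu}$ are the linear maps determined by $\mu(w)=I_w$ and $\bar{\mu}(\overline w)=D_w$. Applying the linear map $\mu\otimes\bar{\mu}$ and using linearity gives $\mu\otimes\bar{\mu}(\mathrm{Id})=\sum_{w\in\Ab^\ast}\mu(w)\otimes\bar{\mu}(\overline w)=\sum_{w\in\Ab^\ast}I_w\otimes D_w$.

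Next I would make explicit the identification under which the right-hand side is an operator acting on analytic functions: a pure tensor $I_w\otimes D_w$, living in the (grading-completed) tensor product of the algebra generated by the multiple It\^o integrals and the algebra generated by the differential operators, acts on an analytic $f\colon\Rb^N\to\Rb$ as $f\mapsto I_w\,(D_w\circ f)$, that is, the stochastic coefficient multiplies the value of the composite differential operator. Under this identification $\sum_{w\in\Ab^\ast}I_w(t)\otimes D_w$ is exactly $\sum_{w\in\Ab^\ast}I_w(t)D_w$, which by~(\ref{flowmap}) — itself obtained by iterating It\^o's formula in Section~\ref{WordEncoding} — is the flowmap $\varphi_t$. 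This establishes $\varphi=\mu\otimes\bar{\mu}(\mathrm{Id})$, and the second equality in the statement is just the substitution $\mathrm{Id}=\sum_{w\in\Ab^\ast}w\otimes\overline w$.

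The only point requiring any care is the status of the series: the equality holds in the grading-completed tensor product, so it is an identity of graded formal series each of whose homogeneous components is a finite sum, and the analyticity of the $V_i$ together with the moment assumptions on the driving processes are what make this expansion represent the genuine flowmap. No convergence analysis beyond that already underlying~(\ref{flowmap}) is needed, so there is no real obstacle here; the substance of the proposition is simply the observation that the separated Taylor expansion is the image of the identity endomorphism of $\Rb\la\Ab\ra$ under $\mu\otimes\bar{\mu}$, which is the reformulation exploited in the sequel.
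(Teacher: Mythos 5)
Your argument is correct and coincides with the paper's implicit one: the proposition is stated without proof precisely because it is the immediate consequence of combining the word representation $\mathrm{Id}=\sum_{w\in\Ab^\ast}w\otimes\overline w$ with the definitions $\mu(w)=I_w$, $\bar\mu(\overline w)=D_w$ and the separated expansion~(\ref{flowmap}). Your additional remark that the identity should be read in the grading-completed tensor product, with each homogeneous component a finite sum, is a sensible precision consistent with the paper's conventions.
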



\section{Change of basis}
\label{changeb}

The key to all our forthcoming arguments is a change of basis argument. In our opinion its interest for stochastic integration theory goes much beyond the present article as it also clarifies various standard constructions such as the It\^o-to-Fisk--Stratonovich transformation for integrals. Recall first the following result from \cite{PR04} which was further developed recently in the work \cite{FoissyPatras24}: 

\begin{proposition}{\cite[Lemma 22]{PR04}}
Let $H$ be a graded connected cocommutative Hopf algebra over $\Rb$ whose graded components are finite dimensional. If $H$ is free as a graded algebra, then the Lie algebra of primitive elements in $H$ is a free Lie algebra.
Furthermore, if $H$ if freely generated by the family $(f_i)_{i \in I}$, one can choose the family $(\log^\ast(\mathrm{Id})(f_i))_{i\in I}$ as a family of primitive free generators.
\end{proposition}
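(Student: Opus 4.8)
The plan is to deduce the statement from two standard facts about the first Eulerian idempotent $e:=\log^\ast(\mathrm{Id})\in\mathrm{End}(H)$, where $\ast$ is the convolution product on $\mathrm{End}(H)$ built from the product and coproduct of $H$, together with a ``morphism out of a free algebra'' argument. First note that $e$ is well defined: since $H$ is graded connected, $\mathrm{Id}-\eta\varepsilon$ is locally nilpotent for $\ast$ (indeed $(\mathrm{Id}-\eta\varepsilon)^{\ast k}$ vanishes on $H_n$ for $k>n$), so $e=\sum_{k\ge1}\frac{(-1)^{k-1}}{k}(\mathrm{Id}-\eta\varepsilon)^{\ast k}$ reduces to a finite sum on each graded component.

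The two facts I would record are: (i) for $k\ge2$, $(\mathrm{Id}-\eta\varepsilon)^{\ast k}=m^{(k-1)}\circ(\mathrm{Id}-\eta\varepsilon)^{\otimes k}\circ\Delta^{(k-1)}$ takes values in $\bar H^{k}\subseteq\bar H^2$, whence $e\equiv\mathrm{Id}-\eta\varepsilon\pmod{\bar H^2}$; in particular $e$ induces the identity map on the space of indecomposables $Q(H)=\bar H/\bar H^2$; and (ii) because $\Delta$ is cocommutative, $e$ is the classical projector of $H$ onto its primitive elements, so $e(H)=\mathfrak g:=\mathrm{Prim}(H)$ (see e.g.\ \cite{CP21}; this is the only place cocommutativity is used). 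For (i) one needs only connectedness, and the verification is the routine observation that $(\mathrm{Id}-\eta\varepsilon)^{\ast k}(x)$ is a sum of $k$-fold products of elements of the augmentation ideal.

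Now let $V$ be a graded vector space with homogeneous basis $(x_i)_{i\in I}$, $\deg x_i=\deg f_i$. The tensor algebra $T(V)$ is free associative on the $x_i$, each $x_i$ is primitive for the usual Hopf structure on $T(V)$, and classically (Friedrichs) $\mathrm{Prim}\bigl(T(V)\bigr)=L(V)$, the free Lie algebra on $V$. Put $p_i:=e(f_i)$, which lies in $\mathfrak g$ by (ii). Freeness of $T(V)$ yields a unique algebra morphism $\phi\colon T(V)\to H$ with $\phi(x_i)=p_i$; since the $x_i$ and the $p_i$ are all primitive, the algebra morphisms $\Delta_H\circ\phi$ and $(\phi\otimes\phi)\circ\Delta_{T(V)}$ coincide on generators and hence everywhere, so $\phi$ is a morphism of graded Hopf algebras. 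It is surjective because, by (i), the $p_i$ and the $f_i$ have the same images in $Q(H)$, and these images form a basis of $Q(H)$ since $H$ is free on $(f_i)$; thus $(p_i)$ is a homogeneous family lifting a basis of the indecomposables of the connected graded algebra $H$, and any such family generates $H$ (a straightforward induction on degree). Finally, $T(V)$ and $H$ are free associative algebras on generating systems having the same (finite) number of elements in each degree, so they have equal graded dimensions in each degree; a degree-preserving surjection between graded spaces with equal finite-dimensional components is an isomorphism, so $\phi$ is an isomorphism of graded Hopf algebras.

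Since a Hopf algebra isomorphism restricts to a bijection on primitives, $\phi$ induces an isomorphism of graded Lie algebras $L(V)=\mathrm{Prim}\bigl(T(V)\bigr)\xrightarrow{\ \sim\ }\mathrm{Prim}(H)=\mathfrak g$ sending each free generator $x_i$ to $p_i=\log^\ast(\mathrm{Id})(f_i)$; this gives both conclusions. The genuinely non-formal inputs are the two black boxes of the second paragraph — in particular the identification of $\log^\ast(\mathrm{Id})$ with the first Eulerian idempotent whose image is $\mathrm{Prim}(H)$ — and the dimension count, which is exactly where the finite-dimensionality of the graded components is used; I expect the main point requiring care to be precisely this last step, i.e.\ ensuring that surjectivity of $\phi$ together with the finiteness hypothesis genuinely forces $\phi$ to be an isomorphism rather than merely an epimorphism.
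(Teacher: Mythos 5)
Your argument is correct. The paper does not actually prove this proposition --- it is quoted verbatim from \cite[Lemma 22]{PR04} and used as a black box --- so there is no in-text proof to compare against; but your route (the first Eulerian idempotent $e=\log^\ast(\mathrm{Id})$ is the identity modulo $\bar H^2$, hence the identity on the indecomposables $Q(H)$, and by cocommutativity projects onto $\mathrm{Prim}(H)$; therefore the $e(f_i)$ are primitive homogeneous lifts of a basis of $Q(H)$, the induced Hopf morphism $T(V)\to H$ is surjective, and the equality of Hilbert series afforded by the finite-dimensionality hypothesis upgrades it to an isomorphism, whence $\mathrm{Prim}(H)\cong L(V)$ by Friedrichs' theorem) is precisely the standard argument underlying that citation. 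The one point worth stating explicitly is that connectedness forces all free generators into strictly positive degrees; this is what makes the induction for surjectivity terminate and keeps each $T(V)_n$ finite dimensional, so that the final dimension count is legitimate.
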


The convolution logarithm of the identity of the Hopf algebra $H$ is the generalisation to arbitrary graded connected commutative or cocommutative Hopf algebras, introduced in \cite{patras1992}, of the classical Eulerian (or Solomon, or canonical) idempotent of the theory of free Lie algebras. In the cocommutative case, $H$ is always an enveloping algebra of a Lie algebra and the map identifies with the projection onto the Lie algebra induced by the Poincaré--Birkhoff--Witt theorem. We refer to \cite{CP21,patras1993decomposition,patras1994algebre} for details and will only use here the resulting formulas in the particular case where $H=(\Rb \la\overline\Ab\ra,\mathrm{conc},\delta).$

\begin{remark}
In \cite{FoissyPatras24}, it was shown that any family of Lie idempotents can actually be used to perform the construction. It would be interesting to understand the meaning of this extension for stochastic calculus in general, and for L\'evy flowmaps in particular.
\end{remark}

\begin{corollary} 
The following elements form a family of algebraic generators of the algebra $(\Rb \la\overline\Ab\ra,\mathrm{conc})$
$$
	\overline a^0 := \sum_{n\ge 1}\frac{(-1)^{n-1}}{n}\sum_{[a_1, \ldots, a_n]=a} \overline a_1 \cdots \overline a_n,
$$
for $a\in \Ab$. Thus, denoting $\overline\Ab^0$ the set of such elements, the two algebras $(\Rb \la\overline\Ab\ra,\mathrm{conc})$ and $(\Rb \la\overline\Ab^0\ra,\mathrm{conc})$ are equal. 
Furthermore, the elements of $\overline\Ab^0$ are primitive for the coproduct $\delta$. Thus, on  $(\Rb \la\overline\Ab^0\ra,\mathrm{conc})$ the expression of the coproduct is given by the unshuffling of words:
$$
	\delta(\overline a_1^0\cdots \overline a_n^0)
	=\sum\limits_{{\{i_1,\ldots,i_k\}\cup \{j_1,\ldots,j_{n-k}\}=\{1,\ldots,n\}}}
	\overline a_{i_1}^0 \cdots \overline a_{i_k}^0 \otimes \overline a_{j_1}^0\cdots \overline a_{j_{n-k}}^0,
$$
for $\overline a_i^0$ in $\overline\Ab^0$.
\end{corollary}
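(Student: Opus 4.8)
The plan is to read this corollary as a direct application of the Proposition recalled above (Lemma~22 of \cite{PR04}) to the concatenation Hopf algebra $H := \bigl(\Rb\la\overline\Ab\ra, \mathrm{conc}, \delta\bigr)$, the graded dual of the quasi-shuffle Hopf algebra, combined with an explicit evaluation of the convolution logarithm of its identity.

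First I would verify that $H$ meets the hypotheses of that Proposition. It is connected because every letter of $\Ab$ has grading $g\ge 1$, so $\mathbf 1$ is the only word of grade $0$; its graded components are finite dimensional since $\Ab$ is locally finite (Remark~\ref{rm:bracket}); it is cocommutative since $\delta$ is the graded dual of the commutative quasi-shuffle product $\star$; and it is free as a graded algebra, being the free associative algebra on the graded set $\overline\Ab$. Applying the Proposition to the free generating family $(\overline a)_{a\in\Ab}$ then gives that $\bigl(\log^\ast(\mathrm{Id})(\overline a)\bigr)_{a\in\Ab}$ is a family of primitive free generators of $\bigl(\Rb\la\overline\Ab\ra, \mathrm{conc}\bigr)$.

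The core computational step is then to check that $\log^\ast(\mathrm{Id})(\overline a) = \overline a^0$. Writing $\log^\ast(\mathrm{Id}) = \sum_{n\ge 1}\tfrac{(-1)^{n-1}}{n}(\mathrm{Id}-\epsilon)^{\ast n}$ and using that $\mathrm{Id}-\epsilon$ annihilates the degree-zero part, evaluation on a letter $\overline a$ retains only the fully reduced part of the iterated coproduct. By \eqref{eq:adjqshcoprod2} the reduced coproduct is $\bar\delta(\overline a)=\sum_{[a_1,a_2]=a}\overline a_1\otimes\overline a_2$, and iterating it — here associativity of the bracket product of Definition~\ref{def:bracketprod} is used — yields $\bar\delta^{(n-1)}(\overline a)=\sum_{[a_1,\ldots,a_n]=a}\overline a_1\otimes\cdots\otimes\overline a_n$; applying $\mathrm{conc}$ gives $(\mathrm{Id}-\epsilon)^{\ast n}(\overline a)=\sum_{[a_1,\ldots,a_n]=a}\overline a_1\cdots\overline a_n$. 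Since the grading is additive on nonzero brackets and each letter has grade at least $1$ (Remark~\ref{rm:bracket}), any surviving tuple satisfies $n\le g(a)$ and, for fixed $n$, there are only finitely many of them; the sum over $n$ is therefore finite and equals exactly $\overline a^0$. Hence $\overline\Ab^0$ is a family of free algebra generators, the algebras $(\Rb\la\overline\Ab\ra,\mathrm{conc})$ and $(\Rb\la\overline\Ab^0\ra,\mathrm{conc})$ coincide, and the $\overline a^0$ are $\delta$-primitive by the conclusion of the Proposition.

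The coproduct formula on products of generators is then a formal consequence: $\delta$ is an algebra morphism, so $\delta(\overline a_1^0\cdots\overline a_n^0)=\prod_{i=1}^n\delta(\overline a_i^0)=\prod_{i=1}^n(\overline a_i^0\otimes\mathbf 1+\mathbf 1\otimes\overline a_i^0)$ computed in $\Rb\la\overline\Ab^0\ra\otimes\Rb\la\overline\Ab^0\ra$; expanding this product, with each factor sending its generator into either the left or the right tensor slot while preserving the left-to-right order within each slot, reproduces the asserted unshuffling sum. I expect the only genuinely delicate point to be the bookkeeping in the third step: pinning down the iterated reduced coproduct (where associativity of the bracket enters) and confirming finiteness in each grade; the rest follows formally from the cited Proposition and from multiplicativity of $\delta$.
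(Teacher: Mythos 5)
Your proposal is correct and follows exactly the route the paper intends: the corollary is presented as a direct application of the quoted Proposition (Lemma~22 of \cite{PR04}) to the concatenation Hopf algebra $(\Rb\la\overline\Ab\ra,\mathrm{conc},\delta)$, with the identification $\overline a^0=\log^\ast(\mathrm{Id})(\overline a)$ obtained from the iterated reduced de-quasi-shuffle coproduct, and the unshuffling formula from primitivity plus multiplicativity of $\delta$. Your verification of the hypotheses (connectedness, local finiteness, cocommutativity by duality with the commutative product $\star$, freeness) and of the finiteness of the sum via the additivity of the grading on nonzero brackets supplies precisely the details the paper leaves implicit.
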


The square bracket process is equal to zero for independent Wiener processes and more generally for independent L\'evy processes as these almost surely do not jump at the same time. This translates algebraically into the fact that the expression for the $(\overline i^{(m)})^0$ involves only letters $\overline i^{(k)}$ with $k\leq m$. More precisely we have the following result.

\begin{lemma}\label{explexpr}
For $m\geq 1$, we have the explicit expression 
\begin{align*}
	(\overline i^{(m)})^0 & =\, 
	\sum_{n=1}^m\sum_{j_1+\cdots + j_n=m}
	\frac{(-1)^{n-1}}{n}\overline i^{(j_1)}\cdots \overline i^{(j_n)}.
\end{align*}
\end{lemma}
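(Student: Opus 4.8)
The plan is to unwind the definition of $\overline a^0$ given in the preceding Corollary in the particular case $a = i^{(m)}$, using the multiplicativity and explicit form of the bracket product from Definition \ref{def:bracketprod} together with the observation recorded in Remark \ref{rm:bracket}(2). Recall that
\[
	(\overline i^{(m)})^0 = \sum_{n\ge 1}\frac{(-1)^{n-1}}{n}\sum_{[a_1,\ldots,a_n]=i^{(m)}} \overline a_1\cdots \overline a_n,
\]
so the entire content of the lemma is to identify exactly which tuples $(a_1,\ldots,a_n)$ of letters satisfy $[a_1,\ldots,a_n]=i^{(m)}$ and to check that the inner sum over such tuples is nonempty only for $1\le n\le m$.

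First I would argue that any tuple contributing to the sum consists solely of letters of the form $i^{(k)}$ with the \emph{same} index $i$ (for $i=d+1,\ldots,\ell$). This follows by downward induction using the iterated-bracket convention $[a_1,\ldots,a_n]=[a_1,[a_2,\ldots,a_n]]$: by cases (1), (2) and (4) of Definition \ref{def:bracketprod}, a bracket $[a,b]$ is nonzero and of the form $i^{(\cdot)}$ only when both $a$ and $b$ are powers $i^{(\cdot)}$ of the same letter $i$ with $i\ge d+1$ — in particular the letter $0$ and the letters $j^{(\cdot)}$ for $1\le j\le d$ or $j\ne i$ are excluded. Then, by case (3), $[i^{(j_1)},\ldots,i^{(j_n)}]=i^{(j_1+\cdots+j_n)}$, so the constraint $[a_1,\ldots,a_n]=i^{(m)}$ is equivalent to $a_k=i^{(j_k)}$ with $j_1+\cdots+j_n=m$ and each $j_k\ge 1$. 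This is exactly the statement already flagged in Remark \ref{rm:bracket}(2). Since each $j_k\ge 1$, a composition of $m$ into $n$ positive parts exists precisely when $1\le n\le m$, which truncates the outer sum at $n=m$ and yields the claimed formula.

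I would then simply substitute this parametrisation of the admissible tuples back into the defining sum, which directly gives
\[
	(\overline i^{(m)})^0 = \sum_{n=1}^m \frac{(-1)^{n-1}}{n}\sum_{\substack{j_1+\cdots+j_n=m\\ j_k\ge 1}} \overline i^{(j_1)}\cdots \overline i^{(j_n)},
\]
matching the statement. It is also worth a one-line sanity check that the case $m=1$ degenerates correctly: the only composition is $n=1$, $j_1=1$, giving $(\overline i^{(1)})^0=\overline i^{(1)}=\overline i$, consistent with $i^{(1)}=i$ and with $\overline a^0=\overline a+(\text{higher terms})$ when $a$ is bracket-indecomposable.

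The argument is essentially a bookkeeping exercise, so there is no serious obstacle; the only point requiring care is the inductive step showing that \emph{every} letter appearing in a contributing tuple must be a power of the \emph{same} $i$ — one must be careful that the iterated bracket is right-nested, so that knowing $[a_2,\ldots,a_n]$ lands in the span of the $i^{(\cdot)}$ is what licenses applying case (3) (and excluding cases (1), (2), (4)) at the outer level, and that this propagates all the way down. Phrasing this induction cleanly — perhaps by first proving the auxiliary claim ``if $[a_1,\ldots,a_n]=i^{(m)}\ne\mathbf 0$ then each $a_k$ is a power of $i$ and the exponents sum to $m$'' by induction on $n$ — is the one place where the write-up needs to be precise rather than merely routine.
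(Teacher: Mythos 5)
Your argument is correct and is exactly the one the paper has in mind (the paper omits the proof as an immediate consequence of the definition of the bracket product): you identify the contributing tuples via Remark \ref{rm:bracket}(2) and the right-nested induction, reducing the inner sum to compositions of $m$. One small imprecision: your auxiliary claim restricts to $i\ge d+1$, but the lemma also covers the letters $i^{(2)}$ for $1\le i\le d$ (where $[i^{(1)},i^{(1)}]=i^{(2)}$ by case (2) of Definition \ref{def:bracketprod}) and the letter $0$; these cases follow by the same bookkeeping and should not be excluded from the statement that all letters in a contributing tuple are powers of the same $i$.
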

We omit the proof as it immediately follows from the definition of the associative commutative bracket product on $\Ab^\ast$.

For example, up to order three we find
\begin{align*}
	(\overline i^{(1)})^0 &= \overline i^{(1)}\\
	(\overline i^{(2)})^0 &= \overline i^{(2)} - \frac{1}{2} \overline i^{(1)}\overline  i^{(1)} \\
	(\overline i^{(3)})^0 &= \overline i^{(3)} - \frac{1}{2} \biggl(\overline i^{(2)} \overline i^{(1)} 
	+ \overline  i^{(1)} \overline i^{(2)}\biggr) + \frac{1}{3} \overline i^{(1)}\overline i^{(1)} \overline i^{(1)} .
\end{align*}

The relation between the $\overline a^0\in \overline\Ab^0$ and the $\overline a\in \overline\Ab$ can be inverted by taking advantage of the properties of the exponential map. Indeed, we have:

\begin{lemma} \label{lemma:exp_dual}
The elements of $\overline\Ab$ can be expanded as
$$ 
	\overline a
	= \sum_{n\ge 1}\sum_{[a_1, \ldots, a_n]=a} 
		\frac{1}{n!} \overline a_1^0 \cdots \overline a_n^0.
$$
\end{lemma}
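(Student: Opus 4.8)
The plan is to obtain this identity as the exponential counterpart of the logarithmic formula defining $\overline a^0$, using the standard fact that in the convolution algebra of a graded connected Hopf algebra the maps $\exp^\ast$ and $\log^\ast$ are mutually inverse, so that in particular $\mathrm{Id} = \exp^\ast\bigl(\log^\ast(\mathrm{Id})\bigr)$.

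First I would work in the concatenation Hopf algebra $H = (\Rb\la\overline\Ab\ra, \mathrm{conc}, \delta)$, which is graded, connected and locally finite; hence the convolution algebra $(\mathrm{End}(H), \ast)$ with $f \ast g := \mathrm{conc}\circ(f\otimes g)\circ\delta$ is a complete filtered algebra on which $\log^\ast$ and $\exp^\ast$ are defined and mutually inverse, see \cite{CP21}. Comparing the formula for $\overline a^0$ in the Corollary above with $\log^\ast(\mathrm{Id}) = \sum_{n\ge 1}\tfrac{(-1)^{n-1}}{n}(\mathrm{Id}-\eta\epsilon)^{\ast n}$ — $\eta,\epsilon$ the unit and counit of $H$ — evaluated on a single letter $\overline a$, where, after the tensor words containing $\mathbf 1$ are discarded, the iterated coproduct of $\overline a$ contributes exactly $\sum_{[a_1,\dots,a_n]=a}\overline a_1\otimes\cdots\otimes\overline a_n$ and concatenation turns this into $\sum_{[a_1,\dots,a_n]=a}\overline a_1\cdots\overline a_n$, one sees that $\overline a^0 = \log^\ast(\mathrm{Id})(\overline a)$ for every $a\in\Ab$.

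Next I would apply $\mathrm{Id} = \exp^\ast(\log^\ast\mathrm{Id})$ to the letter $\overline a$, obtaining $\overline a = \sum_{n\ge 0}\tfrac1{n!}(\log^\ast\mathrm{Id})^{\ast n}(\overline a)$, the $n=0$ term vanishing because $\overline a$ has positive grade. It then remains to identify the $n$-th term: the $n$-th convolution power of $\log^\ast\mathrm{Id}$ applied to $\overline a$ is the $n$-fold concatenation of the factors obtained by applying $\log^\ast\mathrm{Id}$ to each tensor component of the $n$-fold \emph{reduced} iterated coproduct $\bar\delta^{[n]}(\overline a)$ — the components involving $\mathbf 1$ drop out since $\log^\ast\mathrm{Id}$ lies in the augmentation ideal. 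By \eqref{eq:adjqshcoprod2} the reduced coproduct of a single letter is $\bar\delta(\overline a) = \sum_{[a_1,a_2]=a}\overline a_1\otimes\overline a_2$; iterating this and using associativity of the bracket product on $\Ab$ gives $\bar\delta^{[n]}(\overline a) = \sum_{[a_1,\dots,a_n]=a}\overline a_1\otimes\cdots\otimes\overline a_n$, a sum of tensor words of single letters. Applying $\log^\ast\mathrm{Id}$ to each such letter replaces $\overline a_t$ by $\overline a_t^0$ by the identification above, and concatenating yields $(\log^\ast\mathrm{Id})^{\ast n}(\overline a) = \sum_{[a_1,\dots,a_n]=a}\overline a_1^0\cdots\overline a_n^0$; summing over $n$ with the weights $1/n!$ gives the stated formula.

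Finally I would record that everything in sight is a finite sum: by additivity of the grading under nonzero brackets (Remark \ref{rm:bracket}) and local finiteness, for fixed $a$ only factorizations with $g(a_1)+\cdots+g(a_n) = g(a)$ contribute, forcing $n\le g(a)$ with finitely many terms, and since each $\overline a_t^0$ is homogeneous of grade $g(a_t)$ the right-hand side is homogeneous of grade $g(a)$, matching $\overline a$. The hard part — such as it is — is the computation of the iterated reduced coproduct $\bar\delta^{[n]}(\overline a)$: one must check that coassociativity of $\delta$ is compatible with associativity of the bracket so that every factorization $[a_1,\dots,a_n]=a$ is counted exactly once, and that no boundary terms involving $\mathbf 1$ are lost in the passage from the full iterated coproduct to $\bar\delta^{[n]}$. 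This bookkeeping is the main obstacle, but it is combinatorially routine.
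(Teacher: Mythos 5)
Your argument is correct and is exactly the route the paper intends: the paper omits the proof, saying only that the relation is inverted ``by taking advantage of the properties of the exponential map'', which is precisely your use of $\exp^\ast\circ\log^\ast=\mathrm{Id}$ in the convolution algebra of $(\Rb\la\overline\Ab\ra,\mathrm{conc},\delta)$ together with the computation of the reduced iterated coproduct of a letter. Both key steps --- the identification $\overline a^0=\log^\ast(\mathrm{Id})(\overline a)$ and the evaluation $(\log^\ast\mathrm{Id})^{\ast n}(\overline a)=\sum_{[a_1,\ldots,a_n]=a}\overline a_1^0\cdots\overline a_n^0$ via the reduced iterated coproduct --- are sound.
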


\begin{definition}
We write $(\Ab^0)^\ast$ for the basis of $\Rb \la\Ab\ra$ dual to the basis $(\overline\Ab^0)^\ast$ of $\Rb \la\overline\Ab\ra$. 
\end{definition}

\begin{remark}
Notice that the expansion of the letters $\overline a^0$ in terms of the $\overline a$, and conversely, immediately provide closed expressions for the basis change from $(\Ab^0)^\ast$ to 
$(\overline\Ab)^\ast$ and conversely.

Although the basis change approach we are following seems the simplest way to deduce them, these formulas could also be deduced from the explicit formula for the Hoffman isomorphism between the quasi-shuffle Hopf algebra and the shuffle algebra over a graded locally finite alphabet. See Theorems 3.3 and 4.1 in \cite{Hoffman2000}. 

The dual bases changes (see below) are indeed precisely given by the formulas for the Hoffman isomorphism and equivalent to it --- up to the fact that the Hoffman isomorphism is usually not stated in terms of bases changes inside a quasi-shuffle Hopf algebra but as an isomorphism between two different Hopf algebras (the shuffle and quasi-shuffle Hopf algebras). We do not detail the computation which follows by duality. Instead, we refer to the recent articles \cite{BFT,FoissyPatras24} for further complementary insights on the Hoffman isomorphism. 
\end{remark}

\begin{lemma}\label{lemma:exp}
The basis change formula expressing the elements of $(\Ab^0)^\ast$ in terms of the elements of $\Ab^\ast$ in $\Rb \la\Ab\ra$ is obtained as
$$
	w^0=\sum_{(i_1, \ldots ,i_\ell)}\frac{1}{i_1!\cdots i_\ell!}(i_1, \ldots,\, i_\ell)\circ w.
$$
where $w^0=a_1^0\cdots a_n^0$. Here, the sum runs over all multi-indices $(i_1, \ldots , i_\ell)$ such that $i_1+ \cdots + i_\ell=|w|$ where $|w|$ denotes the length of the word $w$ and  
\begin{equation*}
	(i_1, \ldots,\, i_\ell)\circ w := 
		[ a_1,\ldots , a_{i_1}]
		[ a_{i_1+1}, \ldots , a_{i_1+i_2}]\cdots
		[ a_{i_1+\cdots+i_{\ell-1}+1}, \ldots , a_{n}]
\end{equation*}
is the concatenation of the repeated bracket product of the letters in each subword as indicated.
\end{lemma}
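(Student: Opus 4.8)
The plan is to read off the coordinates of $w^0$ in the word basis $\Ab^\ast$ by pairing with the dual word basis of $\Rb\langle\overline\Ab\rangle$, and to evaluate that pairing by re-expanding ordinary words over $\overline\Ab$ in terms of the generators $\overline a^0$ via Lemma~\ref{lemma:exp_dual}. Recall that, by construction, $w^0$ (for $w=a_1\cdots a_n$) is the unique element of $\Rb\langle\Ab\rangle$ with $\langle w^0,\overline{a'_1}^0\cdots\overline{a'_k}^0\rangle=\delta_{w,\,a'_1\cdots a'_k}$ for every word over $\overline\Ab^0$; these new words form a basis of $\Rb\langle\overline\Ab\rangle$ by the Corollary. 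Since $\Ab^\ast$ is a basis of $\Rb\langle\Ab\rangle$ with dual basis $\{\overline v:v\in\Ab^\ast\}$, we have $w^0=\sum_{v\in\Ab^\ast}\langle w^0,\overline v\rangle\,v$, so the task reduces to computing $\langle w^0,\overline v\rangle$ for an arbitrary word $v=b_1\cdots b_m$.

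First I would expand $\overline v=\overline b_1\cdots\overline b_m$ in the new-word basis. Applying Lemma~\ref{lemma:exp_dual} to each factor, $\overline b_j=\sum_{r\ge1}\sum_{[c^j_1,\ldots,c^j_r]=b_j}\tfrac1{r!}\,\overline c^j_1\cdots\overline c^j_r$, and multiplying out — the concatenation of words over $\overline\Ab^0$ is again such a word — exhibits $\overline v$ as a linear combination of new words $\overline c^1_1\cdots\overline c^1_{r_1}\cdots\overline c^m_1\cdots\overline c^m_{r_m}$, each with coefficient $\tfrac1{r_1!\cdots r_m!}$, the sum ranging over all tuples of bracket decompositions $[c^j_1,\ldots,c^j_{r_j}]=b_j$. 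Pairing with $w^0$ selects exactly the terms whose underlying new word is $\overline a_1^0\cdots\overline a_n^0$, i.e. those for which the concatenation $c^1_1\cdots c^m_{r_m}$ equals $w=a_1\cdots a_n$ (here freeness of the $\overline a^0$ is used, so that the new word determines the generators uniquely). Such a term is the same datum as a splitting of $w$ into $m$ consecutive nonempty blocks $W_1\cdots W_m$ with $|W_j|=r_j$ and $[W_j]=b_j$ (iterated bracket of the letters of $W_j$), contributing $\tfrac1{r_1!\cdots r_m!}$; hence $\langle w^0,\overline v\rangle=\sum\tfrac1{|W_1|!\cdots|W_m|!}$ over all such splittings, and $0$ if none exists.

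It then remains to reorganise $w^0=\sum_v\langle w^0,\overline v\rangle\,v$ as a single sum over compositions of $n=|w|$. Each composition $(i_1,\ldots,i_\ell)$ of $n$ cuts $w$ into consecutive blocks of those lengths and thereby determines the word $v=[a_1,\ldots,a_{i_1}][a_{i_1+1},\ldots,a_{i_1+i_2}]\cdots=(i_1,\ldots,i_\ell)\circ w$ together with the coefficient $\tfrac1{i_1!\cdots i_\ell!}$; conversely, every pair consisting of a word $v$ and a splitting contributing to $\langle w^0,\overline v\rangle$ arises this way exactly once. Compositions with some block whose bracket is $\mathbf0$ contribute the zero element and are harmless. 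Collecting terms yields precisely $w^0=\sum_{(i_1,\ldots,i_\ell)}\tfrac1{i_1!\cdots i_\ell!}\,(i_1,\ldots,i_\ell)\circ w$.

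The argument is essentially bookkeeping; the one point needing care — and the part I expect to be the main obstacle in writing it cleanly — is this last reindexing, namely checking that summing over all words $v$ together with all ways their new-word expansion hits $\overline a_1^0\cdots\overline a_n^0$ matches, with the correct multiplicities, summing over compositions of $n$, and that the nested-bracket convention $[a_1,\ldots,a_n]=[a_1,[a_2,\ldots,a_n]]$ is applied consistently on the $\overline\Ab^0$ side (Lemma~\ref{lemma:exp_dual}) and on the $\Ab$ side (the definition of $(i_1,\ldots,i_\ell)\circ w$). A tidier way to package all of this, which I would at least indicate, is to note that the claimed formula is simply the transpose, with respect to the graded pairing, of the concatenation-multiplicative change of basis from $\overline\Ab^0$-words to $\overline\Ab$-words furnished by Lemma~\ref{lemma:exp_dual}; multiplicativity then makes the combinatorics automatic.
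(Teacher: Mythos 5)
Your proof is correct and follows exactly the route the paper intends: the paper omits this computation, remarking only that the basis change ``follows by duality'' from the expansion in Lemma~\ref{lemma:exp_dual}, and your pairing argument $w^0=\sum_{v}\langle w^0,\overline v\rangle\,v$, with $\langle w^0,\overline v\rangle$ evaluated by re-expanding $\overline v$ in the $\overline\Ab^0$-word basis, is precisely that duality computation carried out in full. The bookkeeping (the bijection between contributing splittings and compositions of $|w|$, and the harmlessness of blocks whose bracket is $\mathbf 0$) is handled correctly.
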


\begin{lemma}
The basis change formula expressing the elements of $\Ab^\ast$ in terms of the elements of $(\Ab^0)^\ast$ in $\Rb \la\Ab\ra$ is obtained as
$$
	w=\sum_{(i_1, \ldots, i_\ell)}
	\frac{(-1)^{|w|-\ell}}{i_1\cdots i_\ell} (i_1, \ldots,\, i_\ell)\circ w^0.
$$
where $w^0=a_1^0\cdots a_n^0$. Here we use the same notation as above by setting
\begin{equation*}
	(i_1, \ldots,\, i_\ell)\circ w^0 := 
		[ a_1,\ldots , a_{i_1}]^0
		[ a_{i_1+1}, \ldots , a_{i_1+i_2}]^0\cdots
		[ a_{i_1+\cdots+i_{\ell-1}+1}, \ldots , a_{n}]^0
\end{equation*}
in terms of the concatenation of repeated bracket products of the letters in each subword as indicated.
\end{lemma}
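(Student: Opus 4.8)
The plan is to obtain this formula by transposition from the multiplicative extension of the Corollary (which expands the $\overline a^0$ in terms of the $\overline a$), rather than by directly inverting the formula of Lemma~\ref{lemma:exp}. First, recall that $\Rb\la\Ab\ra$ carries the two bases $\Ab^\ast$ and $(\Ab^0)^\ast=\{w^0\}_{w\in\Ab^\ast}$, which are the dual bases of the bases $\overline\Ab^\ast$ and $(\overline\Ab^0)^\ast=\{\overline a_1^0\cdots\overline a_n^0\}$ of $\Rb\la\overline\Ab\ra$; all four are homogeneous for the grading $g$, since by Remark~\ref{rm:bracket} $g$ is additive along nonzero brackets. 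Within each (finite-dimensional) homogeneous component we may then invoke the elementary fact that two pairs of dual bases are related by mutually transposed change-of-basis matrices: if $\overline a_1^0\cdots\overline a_n^0=\sum_{v\in\Ab^\ast}M_{u,v}\,\overline v$ for every word $u=a_1\cdots a_n$, then $w=\sum_{u\in\Ab^\ast}M_{u,w}\,u^0$ for every word $w$.

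Next I would compute $M$ explicitly. Substituting the Corollary's expansion into each factor of $\overline a_1^0\cdots\overline a_n^0$ and multiplying out the concatenation, a generic term is obtained by choosing, for each $j=1,\dots,n$, an integer $k_j\ge1$ and a factorisation $[b^{(j)}_1,\dots,b^{(j)}_{k_j}]=a_j$, and contributes $\prod_{j=1}^{n}\frac{(-1)^{k_j-1}}{k_j}$ times the word $\overline{b^{(1)}_1}\cdots\overline{b^{(1)}_{k_1}}\,\overline{b^{(2)}_1}\cdots\overline{b^{(n)}_{k_n}}$. Consequently, for a fixed word $w$, the coefficient $M_{u,w}$ equals the sum, over all compositions $(i_1,\dots,i_n)$ of $|w|$ for which $u=(i_1,\dots,i_n)\circ w$ in the sense of Lemma~\ref{lemma:exp} (i.e.\ the $j$-th letter of $u$ is the bracket of the $j$-th block of $w$ cut out by the composition), of the weight $\prod_{j=1}^n\frac{(-1)^{i_j-1}}{i_j}=\frac{(-1)^{|w|-n}}{i_1\cdots i_n}$, using $i_1+\cdots+i_n=|w|$.

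Finally, inserting this expression into $w=\sum_u M_{u,w}\,u^0$ and reorganising the double sum as a single sum over compositions $(i_1,\dots,i_\ell)$ of $|w|$ — each one contributing the basis vector $u^0$ with $u=(i_1,\dots,i_\ell)\circ w$, and $\bigl((i_1,\dots,i_\ell)\circ w\bigr)^0=(i_1,\dots,i_\ell)\circ w^0$ straight from the definitions — yields
\[
	w=\sum_{(i_1, \ldots, i_\ell)}\frac{(-1)^{|w|-\ell}}{i_1\cdots i_\ell}\,(i_1, \ldots, i_\ell)\circ w^0,
\]
with the understanding that a term is zero whenever one of the bracket products it involves is $\mathbf 0$ (such compositions contribute nothing to any $M_{u,w}$ and so may be included harmlessly). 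This is exactly the asserted identity.

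I do not expect a genuine obstacle. The step needing most care is the bookkeeping in the middle paragraph — correctly matching the factorisations produced by expanding the concatenation with compositions of $|w|$, and reading off the sign $(-1)^{|w|-\ell}$ and the weight $1/(i_1\cdots i_\ell)$ — together with the routine checks that the transpose principle applies in this infinite-dimensional but locally finite setting (grade-preservation and finiteness of all sums). An alternative, less transparent route would be to verify directly that the change-of-basis matrix of Lemma~\ref{lemma:exp} and the one above are mutually inverse, which amounts to a Möbius-type identity on compositions; as the remark before Lemma~\ref{lemma:exp} indicates, both formulas are equivalent to the explicit form of the Hoffman isomorphism.
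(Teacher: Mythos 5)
Your proposal is correct and follows essentially the route the paper itself indicates: the remark preceding Lemma~\ref{lemma:exp} states that both basis-change formulas ``follow by duality'' from the letter-level expansions, and your argument is precisely that duality computation carried out in detail --- transposing the (grade-preserving, locally finite) change-of-basis matrix of the Corollary's expansion of $\overline a^0$ in terms of $\overline\Ab^\ast$ and reading off the coefficient $\prod_j(-1)^{i_j-1}/i_j=(-1)^{|w|-\ell}/(i_1\cdots i_\ell)$. The bookkeeping matching block decompositions of $w$ with compositions $(i_1,\dots,i_\ell)$ of $|w|$ is handled correctly, so nothing further is needed.
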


\begin{lemma}
With the notation of the previous section, we obtain
$$
	\mathrm{Id}=\sum\limits_{w\in\Ab^\ast}w\otimes\overline w=\sum\limits_{w^0\in(\Ab^0)^\ast}w^0\otimes\overline w^0.
$$
Further, the expression of the coproduct in the basis $(\overline\Ab^0)^\ast$ of $\Rb \la\overline\Ab\ra$ dualises to the expression of the product $\star$  as a shuffle product in the basis $(\Ab^0)^\ast$ of $\Rb \la\Ab\ra$:
\begin{align}
	(a_1^0\cdots a_n^0) \star (a_{n+1}^0\cdots a_{n+m}^0)
	&=\sum\limits_{\sigma\in Sh(n,m)}a^0_{\sigma(1)}\cdots a^0_{\sigma(n+m)}\label{eq:shuff0} \\
	&=:(a_1^0\cdots a_n^0)\sh (a_{n+1}^0 \cdots a_{n+m}^0),\label{eq:shuff}
\end{align}
for  $a_i^0\in\Ab^0$, and where $Sh(n,m)$ stands for the set of permutations of $\{1,\ldots,n+m\}=:[n+m]$ such that $\sigma^{-1}(1) < \cdots < \sigma^{-1}(n)$ and $\sigma^{-1}(n+1)< \cdots < \sigma^{-1}(n+m)$.
\end{lemma}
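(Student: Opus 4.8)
The plan is to prove the two assertions in turn, both by elementary duality. Throughout I use that, by definition, $(\Ab^0)^\ast$ is the basis of $\Rb\la\Ab\ra$ dual to the basis $(\overline\Ab^0)^\ast$ of $\Rb\la\overline\Ab\ra$; here $(\overline\Ab^0)^\ast$ is indeed a basis because, by the Corollary, $\overline\Ab^0$ freely generates the concatenation algebra $\Rb\la\overline\Ab\ra$, and it is a \emph{homogeneous} basis because the grading is additive on brackets (Remark \ref{rm:bracket}), so each $\overline a^0$ is homogeneous of degree $g(a)$.

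For the first identity, fix a homogeneous degree $n$ and write $V_n$ for the degree-$n$ component of $\Rb\la\Ab\ra$ and $V_n^\ast$ for its graded dual. Then $\sum_{g(w)=n}w\otimes\overline w$ and $\sum_{g(w^0)=n}w^0\otimes\overline w^0$ are both finite sums in $V_n\otimes V_n^\ast$, and each is the canonical element representing $\mathrm{id}_{V_n}$; since this canonical element equals $\sum_i e_i\otimes e_i^\ast$ for \emph{any} pair of mutually dual bases, the two sums coincide, and summing over $n$ gives $\sum_w w\otimes\overline w=\sum_{w^0}w^0\otimes\overline w^0=\mathrm{Id}$. Alternatively, one evaluates $\sum_{w^0}w^0\otimes\overline w^0$ on an arbitrary $u\in\Ab^\ast$ via the convention $(w\otimes\overline w')(w''):=\la w'',\overline w'\ra w$: the result is $\sum_{w^0}\la u,\overline w^0\ra\,w^0$, which is just the expansion of $u$ in the basis $(\Ab^0)^\ast$, hence $u$ — the same output as $\sum_w w\otimes\overline w$.

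For the shuffle formula, recall that the coproduct $\delta$ on $\Rb\la\overline\Ab\ra$ was \emph{defined} to be adjoint to the product $\star$, i.e.\ $\la x\star y,\overline z\ra=\la x\otimes y,\delta(\overline z)\ra$ for all $x,y\in\Rb\la\Ab\ra$ and $\overline z\in\Rb\la\overline\Ab\ra$, the pairing on a tensor product being the product of the two pairings. It therefore suffices to test both sides of \eqref{eq:shuff0} against an arbitrary basis word $\overline z=\overline b_1^0\cdots\overline b_k^0$ of $(\overline\Ab^0)^\ast$. On the left, by the Corollary $\delta(\overline z)$ is the unshuffle coproduct applied to $\overline b_1^0\cdots\overline b_k^0$, and since $(\Ab^0)^\ast$ is the basis dual to $(\overline\Ab^0)^\ast$, the pairing picks out exactly the splittings $\{1,\dots,k\}=I\sqcup J$ for which the subword of $\overline z$ indexed by $I$ spells $a_1^0\cdots a_n^0$ and the one indexed by $J$ spells $a_{n+1}^0\cdots a_{n+m}^0$, so $\la x\star y,\overline z\ra$ counts these (and vanishes unless $k=n+m$). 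On the right, $\la\sum_{\sigma\in Sh(n,m)}a^0_{\sigma(1)}\cdots a^0_{\sigma(n+m)},\overline z\ra$ counts the $\sigma\in Sh(n,m)$ with $b_p=a_{\sigma(p)}$ for all $p$ (again vanishing unless $k=n+m$). The classical bijection $\sigma\mapsto\bigl(I=\sigma^{-1}(\{1,\dots,n\}),\,J=\sigma^{-1}(\{n+1,\dots,n+m\})\bigr)$ identifies the two counting problems: the defining inequalities $\sigma^{-1}(1)<\cdots<\sigma^{-1}(n)$ say precisely that $\sigma$ sends the $j$-th smallest element of $I$ to $j$, which is exactly the condition that $\overline z$ read along $I$ spells $a_1^0\cdots a_n^0$, and similarly for $J$. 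Hence the two sides of \eqref{eq:shuff0} pair equally with every basis word of $(\overline\Ab^0)^\ast$; since the graded pairing is perfect on homogeneous components and $(\Ab^0)^\ast$ is the dual basis, \eqref{eq:shuff0} follows, and \eqref{eq:shuff} only names the right-hand side. (Equivalently, the Corollary identifies $(\Rb\la\overline\Ab\ra,\mathrm{conc},\delta)$ in the basis $(\overline\Ab^0)^\ast$ with the enveloping algebra of the free Lie algebra on $\overline\Ab^0$, whose graded dual is the shuffle Hopf algebra on $\Ab^0$.)

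The argument is formal throughout; the one place requiring care is the combinatorial bookkeeping in the last step — that the dual-basis pairing between $(\Ab^0)^\ast$ and $(\overline\Ab^0)^\ast$ really is the expected letter-by-letter Kronecker pairing (immediate once one knows $\overline\Ab^0$ is a free generating set, so that words on $\overline\Ab^0$ are linearly independent), and that the unshuffle splittings occurring in $\delta$ correspond bijectively to $(n,m)$-shuffles. Gradedness and local finiteness of both Hopf algebras guarantee there are no convergence issues and that agreement of all pairings with basis elements forces equality of elements.
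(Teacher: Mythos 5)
Your proof is correct and follows the same route the paper intends (and leaves implicit): the first identity is the basis-independence of the canonical tensor representing $\mathrm{Id}$ with respect to any pair of mutually dual homogeneous bases, and the shuffle formula is obtained by dualising the unshuffle coproduct on the primitive letters $\overline a^0$ established in the Corollary. The extra details you supply — homogeneity of the $\overline a^0$, freeness of $\overline\Ab^0$ so that words in these letters are linearly independent, and the bijection between $(n,m)$-shuffles and two-block splittings — are exactly the points the paper dismisses as ``following by duality,'' and they are handled correctly.
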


Hence, we have the following expression for the flowmap.

\begin{proposition}\label{idrepflow}
The flowmap $\varphi$ has the representation
\begin{equation}
	\varphi=\mu\otimes\bar{\mu}(\mathrm{Id})=\mu\otimes\bar{\mu} \biggl(\sum_{w^0} w^0\otimes \overline w^0\biggr),
\end{equation}
where the sum is taken over all words in $(\Ab^0)^\ast$. 
\end{proposition}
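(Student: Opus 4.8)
The plan is to obtain Proposition~\ref{idrepflow} as an immediate corollary of two facts already available in the excerpt: the representation $\varphi=\mu\otimes\bar{\mu}(\mathrm{Id})$ of the flowmap as the image under $\mu\otimes\bar{\mu}$ of the identity endomorphism of $\Rb\la\Ab\ra$, and the lemma stated just above Proposition~\ref{idrepflow}, which asserts that this identity endomorphism admits the two equal expansions $\mathrm{Id}=\sum_{w\in\Ab^\ast}w\otimes\overline w=\sum_{w^0\in(\Ab^0)^\ast}w^0\otimes\overline w^0$ in the two mutually dual basis pairs. Granting these, one simply applies the linear map $\mu\otimes\bar{\mu}$ to the second expansion of $\mathrm{Id}$, obtaining $\varphi=\mu\otimes\bar{\mu}(\mathrm{Id})=\mu\otimes\bar{\mu}\bigl(\sum_{w^0}w^0\otimes\overline w^0\bigr)$ with the sum running over all words in $(\Ab^0)^\ast$, which is exactly the claim.

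The substance of the argument therefore sits in the two ingredients. For the first I would invoke the proposition of Section~\ref{duality} verbatim. For the second --- the genuinely load-bearing statement --- I would argue as follows. For a graded vector space all of whose graded components are finite dimensional, any homogeneous basis $(e_i)$ of $V$ together with the corresponding dual basis $(e_i^\ast)$ of the graded dual $V^\ast$ represents the identity of $V$, viewed in the completed tensor product $V\otimes V^\ast$, as the diagonal sum $\sum_i e_i\otimes e_i^\ast$; since this outcome is always $\mathrm{Id}$, it does not depend on the chosen basis. Applying this with $V=\Rb\la\Ab\ra$ and the basis $(\Ab^0)^\ast$, whose dual basis of $\Rb\la\overline\Ab\ra$ is $(\overline\Ab^0)^\ast$ by the very definition of $(\Ab^0)^\ast$, gives $\mathrm{Id}=\sum_{w^0}w^0\otimes\overline w^0$, equal to $\sum_w w\otimes\overline w$. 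More concretely, and equivalently, one substitutes the change-of-basis formula of Lemma~\ref{lemma:exp} (expressing $w^0$ through the $w$) together with its inverse (expressing $\overline w^0$ through the $\overline w$) into $\sum_{w^0}w^0\otimes\overline w^0$ and checks that the coefficients collapse to $\sum_w w\otimes\overline w$; this collapse is precisely the assertion that the two bases are in graded duality.

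I expect essentially no real obstacle here --- the proposition is a corollary --- but the one point I would treat with care is that the reindexing from $\Ab^\ast$ to $(\Ab^0)^\ast$ is compatible with the grading, so that passing the infinite sum through $\mu\otimes\bar{\mu}$ term by term is legitimate and the two sides are literally equal in the completed tensor product. This compatibility follows from the additivity of the bracket product on gradings whenever the product is nonzero (Remark~\ref{rm:bracket}): each basis element $w^0$ of grade $k$ is a finite linear combination of words of grade $k$, and by local finiteness of the graded alphabet $\Ab$ every graded component of $\Rb\la\Ab\ra$ is finite dimensional, so the completed tensor product and the algebra morphisms $\mu$ and $\bar{\mu}$ behave as expected on homogeneous pieces.
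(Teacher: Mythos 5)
Your proposal is correct and follows exactly the paper's route: the paper derives Proposition~\ref{idrepflow} as an immediate consequence of the representation $\varphi=\mu\otimes\bar{\mu}(\mathrm{Id})$ together with the preceding lemma's basis-independent diagonal expansion of $\mathrm{Id}$ in the dual pair $(\Ab^0)^\ast$, $(\overline\Ab^0)^\ast$. Your additional remarks on graded duality and local finiteness are the standard justification the paper leaves implicit.
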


Note, however, that the terms in the above expansion, $\mu(w^0)$ and $\bar{\mu}(\overline{w}^0)$, cannot be immediately interpreted in the same way as when using the representation associated with $\Ab$. We will explore the interpretation of these new terms in the following sections.

For consistency with our previous article on the exponential series for stochastic differential equations driven by continuous semimartingales \cite{exp_Lie_series}, we will set:

\begin{definition}[Renormalised flowmap]
\begin{equation}
\label{eq:Jintegrals}
	J_w:=\mu(w^0) 
	\quad\ \text{and} \quad\
	V_w:=\bar\mu(\overline w^0),
\end{equation}
so that the flowmap expansion rewrites as
\begin{equation}
	\varphi_t=\sum_{w} J_w(t) V_w.
\end{equation}
We call this expansion the renormalised expansion of the flowmap.
\end{definition}

To avoid confusion, as the symbol $V_i$ is now used for both a vector field (defined as a function from $\Rb^N$ to $\Rb^N$ introduced at the beginning of the article) and the corresponding differential operator $V_i \cdot \partial$, which equals $\bar{\mu}(\overline{i}^0)$, we will henceforth denote the vector field by $V_i$ and the differential operator $\bar{\mu}(\overline{i}^0)$, as defined in (\ref{eq:Jintegrals}), by $V_{i^{(1)}}$.

The terminology ``renormalised expansion" originates from the fact that for continuous semimartingales the process we are presently investigating for L\'evy processes corresponds to the It\^o-to-Fisk--Stratonovich transformation. The latter is known to be a Wick-type renormalisation process. It can be regarded as a generalisation to continuous semimartingales of the Wick product construction for quantum fields and random variables. For a Hopf algebraic approach to Wick products see e.g.~\cite{EPTZ}.

Let us detail further this observation focussing on the iterated stochastic integral terms $J_w$ . For Wiener processes, or  continuous semimartingales, these are Fisk--Stratonovich integrals \cite{exp_Lie_series}. While we do not have the exact same interpretation for $J_w$ defined in \eqref{eq:Jintegrals}, equations \eqref{eq:shuff0} and \eqref{eq:shuff} imply 
the stochastic iterated integrals $J_w$ obey the usual  integration by parts formula. That is,
in analogy to the continuous case, since the map $\mu$  is an algebra morphism from $\bigl(\Rb\la \Ab\ra, \star)$ onto the algebra generated by the iterated integrals $I_w$, or, equivalently by the $J_w$, we have 
$$
	J_v \cdot J_w=\mu(v^0)\cdot \mu(w^0)  =\mu(v^0\star w^0) =\mu(v^0\sh w^0)= J_{v \sh w},
$$
where in the last term we use the convention that $J_x$, for $x=\lambda_1w_1+\cdots + \lambda_n w_n$ with the $w_i$ being words in $\Ab^\ast$, stands for $\lambda_1J_{w_1}+\cdots + \lambda_n J_{w_n}.$ This result will be essential later on, when we will obtain a Chen--Strichartz formula for L\'evy-driven stochastic differential equations.


\section{The renormalised vector fields}
\label{sec:rvf}

Recall that the (pre-Lie) Magnus series classically expresses the derivative of the logarithm of the fundamental solution of a time-dependent linear matrix differential equation. See, for example, \cite{EFMan2009}, \cite[Remark 6.5.3]{CP21} or the survey \cite{EPNacer}. The present section is dedicated to showing that the renormalised differential operator $V_{i^{(m)}}$ is obtained from the pre-Lie Magnus expansion of $V_i$.

\begin{theorem}
\label{thm:vectorfield}
For any letter $i^{(m)}\in \Ab$, $m\ge 1$, the renormalised differential operator 
$$
	V_{i^{(m)}}:=\bar{\mu}((\overline i^{(m)})^0)
$$ 
is a vector field. More precisely, $V_{i^{(m)}}$ is given by the $m$-th component of the pre-Lie Magnus expansion of the vector field $V_{i^{(1)}}=V_i \cdot \partial$.
\end{theorem}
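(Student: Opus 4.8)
The plan is to identify the operator $V_{i^{(m)}} = \bar\mu((\overline i^{(m)})^0)$ with the $m$-th pre-Lie Magnus component of $V_{i^{(1)}}$ by working entirely on the dual (concatenation) side, where $(\overline i^{(m)})^0$ is an explicit primitive element and the de-quasi-shuffle coproduct $\delta$ restricts to the trivial (unshuffle) coproduct. First I would recall from Lemma~\ref{explexpr} that
\[
	(\overline i^{(m)})^0 = \sum_{n=1}^m \sum_{j_1+\cdots+j_n=m} \frac{(-1)^{n-1}}{n}\,\overline i^{(j_1)}\cdots\overline i^{(j_n)},
\]
and from Lemma~\ref{lemma:exp_dual} the inverse relation $\overline i^{(m)} = \sum_{n\ge 1}\sum_{j_1+\cdots+j_n=m}\frac{1}{n!}(\overline i^{(j_1)})^0\cdots(\overline i^{(j_n)})^0$. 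Applying $\bar\mu$ to the latter and writing $\Omega_k := V_{i^{(k)}} = \bar\mu((\overline i^{(k)})^0)$ for the unknowns, and noting $\bar\mu(\overline i^{(m)}) = D_{i^{(m)}}$, one obtains for each $m$ the system
\[
	D_{i^{(m)}} = \sum_{n\ge 1}\sum_{j_1+\cdots+j_n=m}\frac{1}{n!}\,\Omega_{j_1}\circ\cdots\circ\Omega_{j_n},
\]
i.e. $\sum_m D_{i^{(m)}} = \exp(\sum_k \Omega_k)$ at the level of the relevant completed composition algebra. So the first genuine step is to package the $D_{i^{(m)}}$ themselves into a single generating series and recognise it as an exponential.

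Second, I would compute that generating series for the $D_{i^{(m)}}$ directly from the definition $D_{i^{(m)}} = \sum_{|\alpha|=m}\frac{1}{\alpha!}V_i^\alpha\partial^\alpha$. Grading $i^{(m)}$ so that it carries a formal parameter tracking $m$, the sum $\sum_{m\ge 0} D_{i^{(m)}}$ is exactly the substitution (pullback) operator $f \mapsto f(\,\cdot\, + V_i(\,\cdot\,))$, which is the time-one flow of the vector field $V_{i^{(1)}} = V_i\cdot\partial$ acting as a constant-coefficient-in-the-flow-direction field — more precisely it is $\exp$ of the operator whose $m$-th graded piece is the $m$-th pre-Lie Magnus component of $V_{i^{(1)}}$. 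This is the classical fact that the flowmap of an autonomous field, written via its iterated generator, has logarithm given by the pre-Lie Magnus expansion; here it specialises because there is a single driving letter $i$, so all the relevant brackets live in the free pre-Lie algebra (equivalently, the Lie algebra of vector fields) generated by $V_i\cdot\partial$. I would cite \cite{EFMan2009} or \cite[Remark 6.5.3]{CP21} for the pre-Lie Magnus identity and match coefficients.

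Third, uniqueness closes the argument: the system $\sum_m D_{i^{(m)}} = \exp(\sum_k\Omega_k)$ determines the $\Omega_k$ recursively and uniquely (graded piece by graded piece, since $\Omega_m = D_{i^{(m)}} - (\text{composites of }\Omega_j,\ j<m)$), so the $\Omega_k = V_{i^{(k)}}$ coming from $\bar\mu$ of the dual primitives must coincide with the pre-Lie Magnus components, which by the previous step solve the same recursion. In particular each $\Omega_m$ is a Lie polynomial in $V_{i^{(1)}}$ — but a Lie polynomial in a single generator is a scalar multiple of that generator for $m=1$ and lies in the span of iterated brackets, which here (one generator) means it is again a single vector field; concretely the pre-Lie Magnus component is manifestly a vector field, giving the ``$V_{i^{(m)}}$ is a vector field'' part of the statement.

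The main obstacle I anticipate is bookkeeping the two different product structures cleanly: $\bar\mu$ sends concatenation to composition of differential operators, while the pre-Lie Magnus expansion is naturally phrased with the pre-Lie product $\triangleright$ of vector fields, and one must check that the primitivity of $(\overline i^{(m)})^0$ for $\delta$ (Corollary after Lemma~\ref{explexpr}) is what licenses passing from the ``grouplike'' generating series $\sum_m \overline i^{(m)}$ to the ``primitive-exponential'' form — i.e. that $\bar\mu$ is compatible with the relevant Hopf structures so that exponentials go to exponentials. Verifying this compatibility, and then correctly translating the single-generator free pre-Lie / free Lie computation into the statement about the explicit operators $D_{i^{(m)}}$, is the technical heart; everything else is the formal manipulation of generating series and a recursion-uniqueness argument.
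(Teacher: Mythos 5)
Your proposal is essentially the paper's own argument in different packaging: the paper evaluates the free pre-Lie algebra $PL(x)$ on one generator into vector fields via $x\mapsto V_{i^{(1)}}$, uses the Grossman--Larson fact that $x^n/n!$ evaluates to $D_{i^{(n)}}$ (your ``$\sum_m D_{i^{(m)}}$ is the substitution operator $f\mapsto f(\,\cdot\,+V_i(\,\cdot\,))$''), and then invokes $\log^\ast(\exp(x))=\Omega^\triangleright(x)$ from Chapoton--Patras to match the explicit $(-1)^{n-1}/n$ coefficients of $(\overline i^{(m)})^0$ with the components of the pre-Lie Magnus expansion; your recursion-plus-uniqueness route through the inverse (exponential) relation is the same identification read in the other direction. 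One sentence of yours is wrong and should be deleted, even though your conclusion survives: the claim that each $\Omega_m$ ``is a Lie polynomial in $V_{i^{(1)}}$'' lying ``in the span of iterated brackets'' cannot be the reason it is a vector field, since the free Lie algebra on a single generator is one-dimensional and all iterated brackets of $V_{i^{(1)}}$ with itself vanish, which would force $\Omega_m=0$ for $m\ge 2$. The correct justification --- which you also state --- is that $\Omega_m^\triangleright(V_i)$ lies in the free \emph{pre-Lie} algebra on one generator (nontrivial in every degree, spanned by rooted trees), and the pre-Lie product $V\triangleright W$ of two vector fields is again a vector field.
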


In order to prove the statement let us recall first some general properties of vector fields and pre-Lie algebras. For further details, proofs and references, we refer the reader to the same references as above.

 For two vector fields on $\mathbb{R}^{{N}}$, 
$$
	V(x)=\sum_{j=1}^{{N}} V^j(x)\frac{\partial}{\partial x_{{j}}} 
	\quad\ \text{and} \quad\
	W(x)=\sum_{i=1}^{{N}} W^i(x)\frac{\partial}{\partial x_{{i}}},
$$ 
we define the product
\begin{equation}
\label{preLieVect1}
	(V \triangleright W) (x)=\sum_{i=1}^{{N}}\Big(\sum_{j=1}^{{N}}
	V^j(x)\frac{\partial}{\partial x{_{{j}}}}W^i(x)\Big)\frac{\partial}{\partial x{_{{i}}}}.
\end{equation}
One can easily show that it satisfies the left pre-Lie relation 
\begin{equation}
\label{plid}
	(V \triangleright W) \triangleright U - V \triangleright (W \triangleright U) 
		= (W \triangleright V) \triangleright U - W \triangleright (V \triangleright U).
\end{equation}

The Lie bracket for vector fields is given by anti-symmetrising the pre-Lie product \eqref{preLieVect1}, i.e.,  for vector fields ${V, W}$ the bracket
\begin{equation}
\label{postLie3}
	[ V,W ]_{\mathcal \triangleright} := V \triangleright W - W \triangleright V
\end{equation}
satisfies the Jacobi identity. Notice that it is true in general that any pre-Lie algebra $(\mathfrak{g},  \triangleright )$ is Lie-admissible, i.e., we obtain a Lie algebra $(\overline{\mathfrak{g}}, [ \cdot, \cdot ]_{\triangleright} )$ by anti-symmetrising the pre-Lie product. 

Guin and Oudom showed in \cite{GO2008} how for any pre-Lie algebra $(\mathfrak{g},  \triangleright )$ the pre-Lie product can be extended to the symmetric algebra $\mathcal{S}(\mathfrak{g})$ over the vector space $\mathfrak{g}$. Choosing a basis $B=(b_i)_{i\in I}$ of $\mathfrak{g}$, 
the algebra $\mathcal{S}(\mathfrak{g})$ identifies with the algebra of polynomials generated by $B$. It is a graded, commutative, cocommutative Hopf algebra with  multiplication of polynomials as product, denoted $\cdot$, and  unshuffling coproduct, defined on monomials as follows:
$$
	\Delta_\sh (a_1\cdots a_n)
	=  \sum\limits_{{\{i_1,\ldots,i_k\}\coprod\{j_1,\ldots,j_{n-k}\}=[n]}}a_{i_1}\cdots a_{i_k}\otimes a_{j_1}\cdots a_{j_{n-k}},
$$
where the $a_i$ belong to $B$. 

One can then define a non-commutative, associative, unital product on $\mathcal{S}(\mathfrak{g})$, known as Grossman--Larson product  
\begin{equation}
\label{GLprod}
	X \ast Y := X^\prime \cdot (X^{\prime\prime} \triangleright Y), \quad X,Y \in \mathcal{S}(\mathfrak{g}),
\end{equation}
where the reduced Sweedler's notation is used: $\Delta_\sh(X) = X^\prime \otimes X^{\prime\prime}$. The $\triangleright$-product on the righthand side of \eqref{GLprod} is defined as an extension of the pre-Lie product from $(\mathfrak{g},  \triangleright )$ to $\mathcal{S}(\mathfrak{g})$ by
\begin{align}
	1 \triangleright X 	&:= X														\label{GO1}\\
	xY \triangleright z 	& := x \triangleright (Y \triangleright z) - (x \triangleright Y) \triangleright z 	\label{GO2}\\
	X \triangleright YZ 	&:= (X' \triangleright Y) \cdot (X'' \triangleright Z) 					\label{GO3},
\end{align}
for $X,Y,Z \in \mathcal{S}(\mathfrak{g})$ and $x,z \in \mathfrak{g}$. For instance, going back to \eqref{GLprod}, we find for elements $x,y,z \in \mathfrak{g} \subset  \mathcal{S}(\mathfrak{g})$
\begin{align}
	x \ast y 
		&= x \cdot y + x \triangleright y \label{GLp31} \\
	(x \cdot y) \ast z 
		&= x \cdot y \cdot z + x \cdot (y \triangleright z) +  y \cdot (x \triangleright z) 
				+  (x \cdot y) \triangleright z  \label{GLp32} \\
		&= x \cdot y \cdot z + x \cdot (y \triangleright z) +  y \cdot (x \triangleright z) 
				+  x \triangleright (y \triangleright z) - (x \triangleright  y) \triangleright z .
\end{align}
The identity 
$$
	(x \cdot y) \triangleright z = x \triangleright (y \triangleright z) - (x \triangleright  y) \triangleright z 
$$
follows from \eqref{GO2}.

With the new product \eqref{GLprod} and the unshuffling coproduct  $\mathcal{S}(\mathfrak{g})$ is a graded connected cocommutative Hopf algebra. In the particular case where $\mathfrak{g}$ is a free pre-Lie algebra it is known as the Grossman--Larson Hopf algebra. Moreover, Guin and Oudom showed that it is isomorphic to the enveloping algebra $\mathcal{U}(\overline{\mathfrak{g}})$, where we recall that $\overline{\mathfrak g}$ is the Lie algebra defined in terms of anti-symmetrising the pre-Lie product. 

Let us apply now these general results to the vector field $D_i=V_{i^{(1)}}$. We use freely the results of \cite{ChaPat2013} and refer to the article for proofs. Consider the free pre-Lie algebra $PL(x)$ over one generator $x$, denote $GL(x)$ its enveloping algebra as defined by the Guin--Oudom construction (the so-called Grossman--Larson Hopf algebra, see also \cite[Def.~6.3.1]{CP21}). We write $\nu$ for the evaluation map induced by $x \longmapsto V_{i^{(1)}}$. It maps an element in $PL(x)$ to a vector field in $\Rb^N$ (viewed as a differential operator) and more generally it maps an arbitrary element in $GL(x)$ to a differential operator. It follows from the original works of Grossman and Larson \cite{GL1,GL2} that $x^n/n!$ (where $x^n$ is the $n$-th power of $x$ in $GL(x)$ viewed as a polynomial algebra) maps to $D_{i^{(m)}}$. 

The series $\exp(x)=\sum_{n\in\Nb}x^n/n!$ is a group-like element in $\widehat{GL(x)}$, the completion of the graded Hopf algebra $GL(x)$. This is a general property in graded connected Hopf algebras: exponentials of primitive elements always form group-like elements in the right completions, see e.g.~\cite{CP21} for details. 

From Proposition 3.1 in \cite{ChaPat2013} and using Lemma \ref{explexpr}, the next proposition follows.

\begin{proposition}\label{propexplog}
The degree $n$ component $y_n$ of the logarithm $\log^\ast(\exp(x))$ in $\widehat{GL(x)}$ is an element of the completion of the free pre-Lie algebra $PL(x)$ given by
$$
	y_n=\sum_{k=1}^n \sum_{j_1+\cdots + j_k=n} \frac{(-1)^{k-1}}{k}x^{j_1} * \cdots  * x^{j_k}.
$$
Therefore, we also have
$$
	\nu(y_n)=\bar\mu((\bar i^{(n)})^0)=V_{i^{(n)}}.
$$ 
In particular, we see that $V_{i^{(n)}}$ is a vector field.
\end{proposition}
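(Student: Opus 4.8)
The plan is to derive Proposition~\ref{propexplog} by transporting the purely combinatorial identity of Lemma~\ref{explexpr} through the evaluation map $\nu$ and the isomorphism between $GL(x)$ and the concatenation Hopf algebra restricted to the sub-alphabet generated by a single letter $i$. First I would observe that the sub-Hopf-algebra of $(\Rb\la\overline\Ab\ra, \mathrm{conc},\delta)$ generated by the letters $\overline i^{(k)}$, $k\ge 1$, is precisely the image under $\bar\mu$-compatible identification of $GL(x)$: the generator $x\mapsto V_{i^{(1)}}$ corresponds to $\overline i^{(1)}$, the power $x^n/n!$ maps to $D_{i^{(m)}}=\bar\mu(\overline i^{(n)})$ (this is the cited fact from \cite{GL1,GL2}), and the Grossman--Larson product $\ast$ on $GL(x)$ matches the concatenation product on words in the $\overline i^{(k)}$ under this dictionary, because both are the enveloping-algebra product on the free pre-Lie algebra over one generator. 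With this dictionary in place, the series $\exp(x)=\sum_n x^n/n!$ corresponds to $\sum_n \overline i^{(n)}$ (suitably interpreted via the completion), which is exactly the group-like element whose convolution logarithm is computed by the Eulerian idempotent formulas used to define the $\overline a^0$.

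Next I would invoke Proposition~3.1 of \cite{ChaPat2013}, which gives the degree-$n$ component of $\log^\ast(\exp(x))$ in $\widehat{GL(x)}$ as the stated alternating sum $y_n=\sum_{k=1}^n\sum_{j_1+\cdots+j_k=n}\frac{(-1)^{k-1}}{k}x^{j_1}\ast\cdots\ast x^{j_k}$; the fact that each $y_n$ lies in the free pre-Lie algebra $PL(x)$ (not just its enveloping algebra) is part of that cited result and follows from the Dynkin-type projection built into the Eulerian idempotent. Then I would apply the dictionary term by term: under $\nu$, $x^{j}$ maps to $\bar\mu(\overline i^{(j)})$ up to the factorial normalisation, and $\ast$ maps to concatenation, so $\nu(y_n)$ becomes $\sum_{k=1}^n\sum_{j_1+\cdots+j_k=n}\frac{(-1)^{k-1}}{k}\,\overline i^{(j_1)}\cdots\overline i^{(j_n)}$ evaluated under $\bar\mu$. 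But this is exactly the expression for $(\overline i^{(n)})^0$ given in Lemma~\ref{explexpr}, so $\nu(y_n)=\bar\mu((\overline i^{(n)})^0)=V_{i^{(n)}}$ by the definition in Theorem~\ref{thm:vectorfield}. Since $y_n\in PL(x)$ and $\nu$ sends $PL(x)$ into the pre-Lie algebra of vector fields on $\Rb^N$, we conclude $V_{i^{(n)}}$ is a vector field.

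The step I expect to require the most care is the precise matching of normalisations between the two pictures: on the $GL(x)$ side the natural generator is $x$ with $x^n/n!\mapsto D_{i^{(n)}}$, whereas Lemma~\ref{explexpr} and the definition of $(\overline i^{(m)})^0$ are written without those factorials because the bracket product $[\,\cdot\,,\cdot\,]$ on $\Ab$ already encodes the combinatorics of the power brackets. One has to check that the $1/n$ in the Eulerian/logarithm formula, the $1/n!$ in $\exp$, and the $\frac{(-1)^{n-1}}{n}$ in the definition of $\overline a^0$ are mutually consistent once the dictionary $x^j \leftrightarrow \overline i^{(j)}$ (absorbing the $1/j!$ into the identification, since $x^j/j!\mapsto \bar\mu(\overline i^{(j)})$) is fixed; this is really the content of using ``Lemma~\ref{explexpr}'' together with Proposition~3.1 of \cite{ChaPat2013}, as the statement asserts. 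A secondary point worth spelling out is that all these identities live in the completion $\widehat{GL(x)}$, so one should note that $\exp(x)$ is group-like there (exponential of a primitive element, as recalled just above the proposition) and that $\log^\ast$ is well-defined on group-like elements of a graded connected Hopf algebra; with local finiteness of the grading this causes no convergence issue and the degree-$n$ truncations carry all the information.
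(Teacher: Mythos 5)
Your proposal follows the same route as the paper, whose own justification is simply that the result ``follows from Proposition 3.1 in \cite{ChaPat2013} and Lemma \ref{explexpr}'': quote the Chapoton--Patras formula for the graded components of $\log^\ast(\exp(x))$, note that these are primitive and hence lie in the completed free pre-Lie algebra $PL(x)$, and then match the image under $\nu$ term by term against Lemma \ref{explexpr}, using $\nu(x^j/j!)=D_{i^{(j)}}=\bar\mu(\overline i^{(j)})$ together with the fact that $\nu$ sends the Grossman--Larson product to composition while $\bar\mu$ sends concatenation to composition. One correction is in order, though it does not sink the argument: your opening claim that the concatenation subalgebra of $(\Rb\la\overline\Ab\ra,\mathrm{conc},\delta)$ generated by the letters $\overline i^{(k)}$ can be \emph{identified} with $GL(x)$ is false. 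That subalgebra is free associative on one generator in each degree, with graded dimensions $1,1,2,4,8,16,\dots$, whereas $GL(x)$ is spanned by rooted forests and has dimensions $1,1,2,4,9,20,\dots$; already in degree $4$ there is no isomorphism, and in fact the map $\overline i^{(k)}\mapsto x^k/k!$ lands in a proper subalgebra of $GL(x)$. Fortunately your actual computation never uses such an identification --- it only uses that $\nu$ and $\bar\mu$ are algebra morphisms into the same composition algebra of differential operators which agree on the generators, and that is exactly what is needed. Your second flagged point is also the right one to worry about: with $x^j$ denoting the plain polynomial power, the degree-$n$ component of $\log^\ast(\exp(x))$ carries divided powers $x^{j_a}/j_a!$, and it is only after absorbing these factorials into the identification $\nu(x^j/j!)=\bar\mu(\overline i^{(j)})$ that the coefficients $\tfrac{(-1)^{k-1}}{k}$ line up exactly with those of $(\overline i^{(n)})^0$ in Lemma \ref{explexpr}.
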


Recall now that the
pre-Lie Magnus expansion of an element $a$ in a pre-Lie algebra is obtained implicitly as (see \cite{EFMan2009})
\begin{equation}
\label{pre-Lie-Magnus}
	\Omega^\triangleright  (a) 
	= \sum_{n \ge 0} \frac{B_n}{n!} \ell^{(n)}_{\Omega^{\triangleright  }(a)\triangleright  }(a),
\end{equation}
{where $B_n$ is the $n$-th Bernoulli number.}
Here we define $\ell^{(n)}_{a \triangleright }(b):=\ell^{(n-1)}_{a\triangleright }(a \triangleright b)$ and $\ell^{(0)}_{a \triangleright }(b):=b$. This is a formal expansion that can be equivalently obtained by recursively computing the components $\Omega^\triangleright_n (a)$, $n>1$, using
\begin{equation}
\label{eq:MagnusRec}
	\Omega_n^\triangleright (a) 
	=  \sum_{k = 1}^{n-1}\dfrac{B_k}{k!}
	\sum_{\substack{i_1,i_2,\ldots,i_k \geq 1 \\ i_1 + i_2 + \cdots + i_k = n-1}} 
	\Omega^\triangleright_{i_1}(a) \triangleright \left(\Omega^\triangleright_{i_2}(a) 
	\triangleright \left(\cdots \triangleright\left(
	\Omega^\triangleright_{i_k}(a) \triangleright a\right)\right)\right).
\end{equation}
Here, $\Omega^\triangleright_n (a)$ refers to the degree $n$ component of the expansion, that is, the sum of terms in the formal expansion containing exactly $n$ times the element $a$. Expanding \eqref{pre-Lie-Magnus} in orders of the dummy parameter $h$, we obtain
\begin{align}
\label{plMagnusN3}
	\Omega^\triangleright  ( ha) 
	&= \sum_{n >0} \Omega^\triangleright_n(a) h^n \\
	&= ha - \frac{h^2}{2} a \triangleright a + \frac{h^3}{4} (a \triangleright a) \triangleright a 
							+ \frac{h^3}{12} a \triangleright (a \triangleright a) + \cdots .
\end{align}

It was shown in \cite{ChaPat2013} (see also \cite[Prop. 6.5.1]{CP21}) that the pre-Lie Magnus expansion can be obtained in $\widehat{GL(x)}$ as
\begin{equation}
\label{exprelation2}
	\Omega^{\triangleright  }(x) = \log^*\circ \exp(x).
\end{equation}

\begin{remark}
Identity \eqref{exprelation2} is well-known in the context of B-series and geometric integration theory under the name backward-error analysis \cite{HLW2006}. In a nutshell, the main idea is to determine a modified ODE such that its exact solution equals the numerical solution of the original problem. Indeed, if we consider the initial value problem $\dot{y}=V(y)$, $y(0)=y_0$, where $V$ is a vector field on $\mathbb{R}^{N}$, then its exact solution is given by the Grossman--Larson exponential, $y(t)=\exp^*(tV)$. Identity \eqref{exprelation2} is known to define the backward-error analysis of the explicit forward Euler method on $\mathbb{R}^{N}$, i.e., $\exp^*(\Omega^{\triangleright  }(hV))(y) = y + hV(y)$, where $h$ denotes the step size.  
\end{remark}

The following corollary, a direct consequence of Proposition \ref{propexplog} and Equation (\ref{exprelation2}), concludes the proof of the main Theorem \ref{thm:vectorfield}.

\begin{corollary}
The renormalised vector field $V_{i^{(n)}}$ is the $n$-th component of the pre-Lie Magnus expansion of {$V_i$}:
\begin{equation}
\label{eq:pLMagnusvectorfields}
	{V_{i^{(n)}}=\Omega_n^{\triangleright  }(V_i).}
\end{equation}
\end{corollary}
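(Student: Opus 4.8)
The plan is to assemble the corollary by chaining together two previously established identities, so that essentially nothing new needs to be proved. First I would recall the statement of Proposition~\ref{propexplog}: the degree-$n$ component of $\log^\ast(\exp(x))$ in $\widehat{GL(x)}$ is a specific element $y_n$ of the completion of the free pre-Lie algebra $PL(x)$, and the evaluation map $\nu$ sends it to $V_{i^{(n)}}$. Second I would invoke Identity~\eqref{exprelation2}, established in \cite{ChaPat2013}, which says $\Omega^\triangleright(x) = \log^\ast\circ\exp(x)$ inside $\widehat{GL(x)}$. Reading off the degree-$n$ component on both sides, this immediately gives $y_n = \Omega^\triangleright_n(x)$ as elements of the completion of $PL(x)$.

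Next I would apply the pre-Lie algebra morphism $\nu$ induced by $x\mapsto V_{i^{(1)}}=V_i\cdot\partial$. Since the pre-Lie Magnus expansion is defined purely in terms of the pre-Lie product (via the recursion \eqref{eq:MagnusRec}, or equivalently the implicit formula \eqref{pre-Lie-Magnus}), $\nu$ commutes with the formation of $\Omega^\triangleright_n$: that is, $\nu(\Omega^\triangleright_n(x)) = \Omega^\triangleright_n(\nu(x)) = \Omega^\triangleright_n(V_i)$, where I use the abuse of notation identifying the vector field $V_i$ with the differential operator $V_i\cdot\partial = V_{i^{(1)}}$. Combining this with $\nu(y_n) = V_{i^{(n)}}$ from Proposition~\ref{propexplog} and the equality $y_n = \Omega^\triangleright_n(x)$ from the previous paragraph yields
\[
	V_{i^{(n)}} = \nu(y_n) = \nu(\Omega^\triangleright_n(x)) = \Omega^\triangleright_n(V_i),
\]
which is exactly \eqref{eq:pLMagnusvectorfields}.

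The only point that deserves a sentence of care is the compatibility of $\nu$ with the pre-Lie Magnus recursion: one should note that $\nu\colon PL(x)\to \mathfrak{X}(\Rb^N)$ is a morphism of pre-Lie algebras (the Guin--Oudom/Grossman--Larson setup guarantees this, since $PL(x)$ is the free pre-Lie algebra on one generator and $V_{i^{(1)}}$ is an arbitrary target element), and that $\nu$ extends continuously to the graded completions so that it can be applied term by term to the formal series $\Omega^\triangleright(x)=\sum_{n>0}\Omega^\triangleright_n(x)$. Given the recursion \eqref{eq:MagnusRec}, which expresses $\Omega^\triangleright_n(x)$ as an iterated pre-Lie product of lower-degree components, an easy induction on $n$ shows $\nu(\Omega^\triangleright_n(x))=\Omega^\triangleright_n(\nu(x))$. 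There is no real obstacle here; the corollary is genuinely a formal consequence of the two cited results, and the main (already completed) work was in proving Proposition~\ref{propexplog} and the relation \eqref{exprelation2}.
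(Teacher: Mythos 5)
Your proposal is correct and follows essentially the same route as the paper, which presents the corollary as a direct consequence of Proposition~\ref{propexplog} and identity~\eqref{exprelation2}. Your added remark that $\nu$ is a pre-Lie morphism commuting with the Magnus recursion \eqref{eq:MagnusRec} (and extends to completions) simply makes explicit a detail the paper leaves implicit; it is accurate and does not change the argument.
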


For example, with the pre-Lie product \eqref{preLieVect1} defined on vector fields, we find
\begin{align*}
	V_{i^{(2)}}
	&= -\frac{1}{2} V_i \triangleright  V_i\\
	V_{i^{(3)}}
	&= \frac{1}{4} (V_i \triangleright V_i) \triangleright V_i 
				+ \frac{1}{12} V_i \triangleright (V_i \triangleright V_i)\\
	V_{i^{(4)}}
	&= \frac{1}{24} V_i  \rhd \big((V_i  \rhd V_i ) \rhd V_i \big) 
			+ \frac{1}{24} (V_i  \rhd V_i ) \rhd (V_i  \rhd V_i ) \\
		&\hskip 10mm
			+ \frac{1}{8} \big((V_i  \rhd V_i ) \rhd V_i \big) \rhd V_i 
			+\frac{1}{24} \big(V_i  \rhd (V_i  \rhd V_i )\big) \rhd V_i. 
\end{align*}

\begin{remark}[Rooted trees 1]
\label{rmk:preLieMagtrees}
As the above examples as well as expansion \eqref{plMagnusN3} indicate, the expressions of the renormalised vector field $V_{i^{(n)}}$ in terms of the pre-Lie Magnus expansion \eqref{eq:pLMagnusvectorfields} are complicated. However, thanks to Cayley's \cite{Cayley1857}
 correspondence between non-planar rooted trees and specific vector fields, known as elementary differentials, the expansion can be expressed in a more compact way using rooted trees. It amounts to a pre-Lie morphism $F$ which goes from the free pre-Lie algebra in $\ell$ generators into the pre-Lie algebra (of vector fields) generated by  vector fields $V_i$, $ 0 \le i \le \ell$, on $\mathbb{R}^N$. Indeed, following \cite{ChapLiv2001}, the free pre-Lie algebra in $\ell$ 
generators $\{\Forest{[i]}\}_{i=0}^\ell$ consists of non-planar rooted trees with vertices decorated by $[\ell] = \{0,\ldots,\ell\}$ and with grafting $\curvearrowright$ as pre-Lie product. Recall that a non-planar rooted tree is a connected and simply connected graph with a distinguished vertex called the root. Vertices other than the root have exactly one outgoing edge and an arbitrary number of incoming edges. Edges are oriented towards the root vertex. A leaf is a vertex without any incoming edges. Recall the definition of the $B_+$-operator which maps a forest of rooted trees, $\tau_1 \cdots \tau_n$, to a rooted tree $\tau$, by grafting the roots of the $\tau_i$ to a single new root, that is, $B^+(\tau_1 \cdots \tau_n)=\tau$. For example
$$
	B^+(\Forest{[]})=\Forest{[[]]}
	\quad
	B^+(\Forest{[]}\Forest{[]})=\Forest{[[][]]}
	\quad
	B^+(\Forest{[]}\Forest{[[]]})=\Forest{[[][[]]]}
	\quad
	B^+(\Forest{[]}\Forest{[]}\Forest{[[]]}\Forest{[[]]}\Forest{[[][]]})=\Forest{[[][][[]][[]][[][]]]}.	
$$
Then, we define the symmetry factor $\sigma(\tau)$ of a rooted tree $\tau$ inductively, by $\sigma(\Forest{[]}) :=1$ and 
$$
	\sigma(B^+(\tau_1^{k_1} \cdots \tau_n^{k_n})) := k_1! \sigma(\tau_1) \cdots k_n!\sigma(\tau_n).
$$
Let $T$ denote the set of non-planar rooted trees and $\mathcal{T}_{[\ell]}$ denotes the vector space spanned by rooted trees decorated by $[\ell]$. A tree $\tau$ with all its vertices decorated by $i \in [\ell]$ is denoted $\tau^{(i)}$. Starting from $F(\Forest{[i]})=V_i$, we then see that 
\begin{align*}
	F(- \frac{1}{2} \Forest{[i[i]]}) 
	&= F(- \frac{1}{2} \Forest{[i]} \curvearrowright \Forest{[i]} ) 
	   = -\frac{1}{2} V_i \triangleright V_i = \Omega_2^\triangleright (V_i) \\
	F(\frac{1}{3}\, \Forest{[i[i[i]]]} + \frac{1}{12}\, \Forest{[i[i][i]]})   
	&=
	F(\frac{1}{4} (\Forest{[i]}  \curvearrowright\Forest{[i]} ) \curvearrowright \Forest{[i]}  
					+ \frac{1}{12} \Forest{[i]} \curvearrowright (\Forest{[i]} \curvearrowright \Forest{[i]} ))\\	
	&=  \frac{1}{4} (V_i \triangleright V_i) \triangleright V_i 
					+ \frac{1}{12} V_i \triangleright (V_i \triangleright V_i)
					= \Omega^\triangleright_3 (V_i) \\
	F(\frac{1}{4} \Forest{[i[i[i[i]]]]} + \frac{1}{12} \Forest{[i[i[i][i]]]} + \frac{1}{12}\Forest{[i[i][i[i]]]})
	&=  \frac{1}{8} \big(V_i \rhd (V_i \rhd V_i)\big) \rhd V_i  
			+ \frac{1}{24} (V_i \rhd V_i) \rhd (V_i \rhd V_i) \\
			 &\quad
			 + \frac{1}{24}  V_i \rhd \big((V_i \rhd V_i) \rhd V_i\big)
			+ \frac{1}{24} V_i \rhd \big((V_i \rhd V_i) \rhd V_i\big)\\
	&=\Omega^\triangleright_4 (V_i) .
\end{align*}
In the free pre-Lie algebra in $\ell$ generators, the pre-Lie Magnus series \eqref{pre-Lie-Magnus} can be expressed as an expansion in non-planar rooted trees 
\begin{align}
\label{eq:MagnusTree}
	\Omega^\triangleright (\Forest{[i]}) 
	&= \sum_{\tau \in T} c_\tau \tau^{(i)}\\
	&= \Forest{[i]} 
		- \frac{1}{2} \Forest{[i[i]]}
		+ \frac{1}{3}\, \Forest{[i[i[i]]]} + \frac{1}{12}\, \Forest{[i[i][i]]} 
		+ \frac{1}{4} \Forest{[i[i[i[i]]]]} + \frac{1}{12} \Forest{[i[i[i][i]]]} + \frac{1}{12}\Forest{[i[i][i[i]]]} + \cdots \nonumber
\end{align}
such that 
\begin{align}
\label{eq:elemdiffMagnus}
	F(\Omega^\triangleright (\Forest{[i]}) ) = \Omega^\triangleright (V_i).
\end{align}
The coefficients $c_\tau$ in \eqref{eq:MagnusTree} can be computed recursively
\begin{equation}
\label{eq:MagnusTreecoeff}
	c_\tau:=\frac{\omega(\tau)}{\sigma(\tau)},
\end{equation}
where the function $\omega(\tau)$ can be computed in several ways: we refer to \cite{CelPat2023} and references therein. 
 
\begin{equation}
\label{omegacoefftable1}
\begin{tabular} {c|c|c|c|c|c|c|c|c|c|c|c|c}
	$\tau$ & \scalebox{0.5}{\Forest{[]}} & \scalebox{0.5}{\Forest{[[]]}} & \scalebox{0.5}{\Forest{[[[]]]}} 
	& \scalebox{0.5}{\Forest{[[][]]}} & \scalebox{0.5}{\Forest{[[[][]]]}} & \scalebox{0.5}{\Forest{[[[[]]]]}} 
	& \scalebox{0.5}{\Forest{[[][[]]]}} & \scalebox{0.5}{\Forest{[[][][]]}} & \scalebox{0.5}{\Forest{[[][][][]]}} \\[0.3cm]
\hline	
	$\omega$ & $1$ & $\frac{1}{2}$ & $\frac{1}{3}$ & $\frac{1}{6}$ & $ \frac{1}{6}$ & $\frac{1}{4}$ 
	& $\frac{1}{12}$ & $0$ & $-\frac{1}{30}$ \\ \hline
	$\sigma$ &  1 & 1 & 1 & 2 & 2 & 1 & 1 & 3! & 4!
\end{tabular} 
\end{equation}
\begin{equation}
\label{omegacoefftable2}
\begin{tabular} {c|c|c|c|c|c|c|c|c|c|c|c|c}
	$\tau$ 	& \scalebox{0.5}{\Forest{[[[][][]]]}} & \scalebox{0.5}{\Forest{[[[]][[]]]}} & \scalebox{0.5}{\Forest{[[][[][]]]}} 
	& \scalebox{0.5}{\Forest{[[[[[]]]]]}} & \scalebox{0.5}{\Forest{[[][[[]]]]}} & \scalebox{0.5}{\Forest{[[[[][]]]]}} 
	& \scalebox{0.5}{\Forest{[[[][[]]]]}} 
	& \scalebox{0.5}{\Forest{[[][][[]]]}} \\[0.3cm]
\hline	
	$\omega$ & $\frac{1}{30}$ & $\frac{1}{30}$ & $\frac{1}{60}$ & $\frac{1}{5}$ 	
	& $\frac{1}{20}$ 	& $\frac{3}{20}$ 	& $\frac{1}{10}$ &
	$\frac{-1}{60}$ \\ \hline
	$\sigma$ &  3! & 2 & 2 & 1 & 1 & 2 & 1 & 2 
\end{tabular}
\end{equation}
\end{remark}

\begin{remark}[Renormalised SDE]
The renormalised expression of the identity matrix of $\Rb \la \Ab\ra$ using the alphabet $\Ab^0$ (instead of $\Ab$), the renormalisation of the iterated integrals, i.e., the $J_w$ terms used in place of the iterated It\^o integrals $I_w$, together with the renormalisation of the vector fields and differential operators, i.e., the $V_w$ terms instead of the $D_w$, allows to formally rewrite the initial stochastic differential equation in the form
\begin{equation}
\label{renSDE}
	Y_t = V_0  {\mathrm d}t 
		+ \sum_{i=1}^d \sum_{k=1}^2 \Omega_k^\triangleright (V_i) \circ\! {\mathrm d}[W_i]^{(k)} 
		+ \sum_{i=d+1}^\ell \sum_{m>0} \Omega_m^\triangleright (V_i) \circ\! {\mathrm d}[J_i]^{(m)} ,
\end{equation}
where the symbol $\Omega_k^\triangleright (V_i)$ stands for the $k$-order component of the pre-Lie Magnus expansion (see above). The symbols $\circ {\mathrm d}[W_i]^{(k)}$ and $\circ {\mathrm d}[J_i]^{(m)}$ refer to the fact that the corresponding iterated stochastic integrals are computed with the processes $J_w$. It will be interesting to explore further the properties of these equations.
\end{remark}

\begin{remark}[Rooted trees 2]
Going back to the explicit expansion of the pre-Lie Magnus expansion using trees \eqref{eq:MagnusTree} and formula \eqref{eq:MagnusTreecoeff} for the coefficients, we can express \eqref{renSDE} compactly as a tree expansion
$$
	Y_t = V_0  {\mathrm d}t 
	+ \sum_{i=1}^d \sum_{{\tau \in T} \atop {1 \le |\tau| \le 2}} c_\tau\mathcal{F}_{V_i}(\tau) \circ\! 
{\mathrm d}[W_i]^{(|\tau|)}
	+ \sum_{i=d+1}^\ell \sum_{\tau \in T} c_\tau\mathcal{F}_{V_i}(\tau)\circ\! {\mathrm d}[J_i]^{(|\tau|)}  .
$$
Here, the set of rooted trees is denoted $T$ and $|\tau|$ is the degree, i.e., the number of vertices of $\tau \in T$. The elementary differential $\mathcal{F}$ is defined for any tree $\tau=B^+[\tau_1 \cdots \tau_k]$ by
$$
	\mathcal{F}_{V_i}(\tau)
	:= V_{i}^{(k)}(\mathcal{F}_{V_{i}}(\tau_1),\ldots, \mathcal{F}_{V_{i}}(\tau_k)),
$$
\end{remark}
where $V_{i}^{(k)}$ is the $k$-th derivative of $V_i$, i.e., a $k$-linear mapping. As examples, we consider 
\begin{align*} 
	\mathcal{F}_{V_i} (\Forest{[[]]})
	&=V_i^{(1)} (V_i)
		= \sum^N_{j,k=1}V_i^j (\partial_j V_i^k)\partial_k \\
	\mathcal{F}_{V_i}(\Forest{[[][]]})
	&=V_i^{(2)}(V_i,V_i)
		= \sum^N_{j,k,m=1} V_i^kV_i^m(\partial_k\partial_mV_i^j) \partial_j\\
	\mathcal{F}_{V_i}(\Forest{[[[]]]})
	&=V_i^{(1)}(V_i^{(1)}(V_i)) 
		= \sum^N_{j,k,m=1} V_i^j (\partial_j V_i^k)(\partial_kV_i^m)\partial_m.
\end{align*}


\section{Chen--Strichartz formula} 
\label{sec:Chen_Strichartz} 

The renormalised representation of the flowmap $\varphi_t=\sum_w J_w(t) V_w$ allows us to extend
the results in \cite{exp_Lie_series}, Theorem 4.8 and Corollary 4.11 from stochastic differential equations driven by continuous semimartingales to L{\'e}vy-driven stochastic equations.

\begin{theorem}[Chen--Strichartz series] 
\label{Th:Chen-Strichartz} 
The logarithm of  the flowmap has the following series representation
\begin{enumerate}
\begin{align*}
	\log\varphi_t 
& \,= \log\biggl(\sum\limits_{w\in\Ab^\ast} J_w(t) V_w\biggr)
\\
 &\,   = \sum\limits_{w\in\Ab^\ast}J_w(t)\biggl(\sum_{\sigma\in{\mathbb S}(|w|)}
\frac{1}{|\sigma|} c_\sigma 
V_{[\sigma (w)]_{\mathrm L}}\biggr),
\end{align*} 
\end{enumerate}
where 
$$
	c_\sigma:=\frac{(-1)^{d(\sigma)}}{|\sigma|}
	\biggl(\begin{array}{c}
	|\sigma|-1\\
	d(\sigma)
		\end{array}
	\biggr),
$$
and where $d(\sigma)$ denotes the number of descents in the permutation $\sigma \in {\mathbb S}_{|w|}$ and $|\sigma|:=|w|$. In particular, it can be represented as
$$\log\varphi_t = \sum\limits_{w\in\Ab^\ast} J_w(t) V_{\psi(w)},$$
where
$\psi(w):=\sum\limits_{\sigma\in{\mathbb S}(|w|)} 
\frac{1}{|\sigma|}c_\sigma [\sigma (w)]_\cL$ is a Lie series.
Here $[w]_{\mathrm L}$ denotes the left-to-right Lie-bracketing of its letters, i.e., we have 
$
	[a_1\cdots a_n]_{\mathrm L}
	=[a_1, [a_2, \ldots [a_{n-1}, a_n]_{\mathrm L}]_{\mathrm L}\ldots ]_{\mathrm L}.
$
\end{theorem}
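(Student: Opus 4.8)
\emph{Proof plan.} The plan is to transport the statement, via the basis change of Section~\ref{changeb}, into the classical setting of logarithms of group-like elements in a free cocommutative Hopf algebra, and then to push the resulting expansion forward through $\bar\mu$ using Theorem~\ref{thm:vectorfield}. First I would record the Hopf-algebraic picture in the new basis: in the basis $(\overline\Ab^0)^\ast$ the concatenation Hopf algebra $(\Rb\la\overline\Ab\ra,\mathrm{conc},\delta)$ is, by the Corollary of Section~\ref{changeb}, the enveloping algebra $\mathcal U(\mathfrak L)$ of the free Lie algebra $\mathfrak L$ on the primitive generators $\{\overline a^0:a\in\Ab\}$, with $\delta$ the unshuffling of these generators; dually, by \eqref{eq:shuff0} the quasi-shuffle $\star$ coincides with the ordinary shuffle $\sh$ in the basis $(\Ab^0)^\ast$, so that $(\Rb\la\Ab^0\ra,\sh,\Delta_{\mathrm{deconc}})$ is the classical shuffle Hopf algebra, graded dual to $\mathcal U(\mathfrak L)$. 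Since $\mu$ is an algebra morphism for $\star$ (Proposition~\ref{def:wordmaps}), the coefficient assignment $w^0\mapsto J_{w^0}(t)=\mu(w^0)$ is a character of this shuffle Hopf algebra, i.e.\ $J_{u^0}\cdot J_{v^0}=J_{u^0\sh v^0}$; by Hopf duality the associated element $g_t:=\sum_{w^0}J_{w^0}(t)\,\overline{w^0}$ (a series in the graded completion, with random-variable coefficients) is then group-like for $\delta$, which I would check directly: the coefficient of $\overline{u^0}\otimes\overline{v^0}$ in $\delta(g_t)$ is the sum of $J_{w^0}(t)$ over the words $w^0$ interleaving $u^0$ and $v^0$, which equals $J_{u^0\sh v^0}(t)=J_{u^0}(t)J_{v^0}(t)$, the matching coefficient of $g_t\otimes g_t$. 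Applying $\bar\mu$ and using Proposition~\ref{idrepflow} gives $\varphi_t=\bar\mu(g_t)=\sum_w J_w(t)V_w$.

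Next I would extract the Lie-series statement. Because $g_t=\mathbf 1+(\text{terms of positive grade})$ and the grading is locally finite, $\log g_t=\sum_{k\ge1}\frac{(-1)^{k-1}}{k}(g_t-\mathbf 1)^{k}$ (powers with respect to concatenation) is a well-defined element of the completion, each homogeneous component a finite sum. In a graded connected Hopf algebra the logarithm of a group-like element is primitive, and the primitive elements of $\mathcal U(\mathfrak L)$ are exactly $\mathfrak L$ (Friedrichs' criterion, or the Corollary of Section~\ref{changeb} together with \cite{PR04}), so those of the completion form $\widehat{\mathfrak L}$; hence $\log g_t\in\widehat{\mathfrak L}$ is a Lie series in the $\overline a^0$. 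Since $\bar\mu$ is an algebra morphism continuous for the gradings, $\log\varphi_t=\log(\bar\mu(g_t))=\bar\mu(\log g_t)$ lies in the Lie subalgebra generated, under the commutator bracket, by the renormalised operators $V_{a^{(m)}}=\bar\mu((\overline a^{(m)})^0)$; by Theorem~\ref{thm:vectorfield} each $V_{a^{(m)}}$ is a genuine vector field, so this bracket is the Lie bracket of vector fields (the antisymmetrisation of the pre-Lie product $\triangleright$). This already shows $\log\varphi_t$ is a Lie series in the renormalised vector fields and gives the final representation $\log\varphi_t=\sum_w J_w(t)V_{\psi(w)}$ once the coefficients are identified.

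For the explicit coefficients I would use that the logarithm of a group-like element equals the first Eulerian (Solomon/canonical) idempotent $\pi_1=\log^\ast(\mathrm{Id})$ of $\mathcal U(\mathfrak L)$ applied to it: $\log g_t=\pi_1(g_t)=\sum_{w^0}J_{w^0}(t)\,\pi_1(\overline{w^0})$. Evaluating $\pi_1$ on a length-$n$ word in the free generators and expressing the result through left-to-right bracketings of permuted words is the classical descent-algebra computation, giving $\pi_1(\overline{a_1^0}\cdots\overline{a_n^0})=\sum_{\sigma\in\Sb(n)}\frac{1}{n}c_\sigma\,[\overline{a_{\sigma(1)}^0}\cdots\overline{a_{\sigma(n)}^0}]_\cL$ with $c_\sigma=\frac{(-1)^{d(\sigma)}}{n}\binom{n-1}{d(\sigma)}$ and $d(\sigma)$ the number of descents. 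This is precisely the Dynkin--Magnus--Strichartz computation carried out in \cite{exp_Lie_series}, Theorem~4.8 and Corollary~4.11 (see also \cite{patras1993decomposition}); it transfers verbatim here because the basis change has made the relevant structures ($\sh$ on one side, primitivity of the $\overline a^0$ on the other) literally the same as in the continuous-semimartingale case. Substituting and applying $\bar\mu$ yields $\log\varphi_t=\sum_w J_w(t)\sum_{\sigma\in\Sb(|w|)}\frac{1}{|\sigma|}c_\sigma V_{[\sigma(w)]_\cL}$; and since $\psi(w):=\sum_{\sigma\in\Sb(|w|)}\frac{1}{|\sigma|}c_\sigma[\sigma(w)]_\cL$ is a linear combination of Lie monomials $[\sigma(w)]_\cL$, it is manifestly a Lie element, which gives the stated compact form.

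\textbf{Expected main obstacle.} The difficulty is not a single hard computation but the Hopf-algebraic bookkeeping of the first two paragraphs: one must pin down exactly which coproduct makes $\varphi_t$ group-like, work consistently in the graded completion with random-variable coefficients, and observe that the bracket in the final formula is unambiguous only because Theorem~\ref{thm:vectorfield} guarantees the renormalised $V_{a^{(m)}}$ are vector fields (for which the commutator of differential operators, the Lie bracket of vector fields, and the antisymmetrised pre-Lie product all agree). Once this setup is in place, the Lie-series conclusion is the standard primitivity argument and the explicit coefficients are the classical symmetric-group/descent-algebra formula imported from \cite{exp_Lie_series}.
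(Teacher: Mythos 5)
Your proposal is correct and follows essentially the same route as the paper, which reduces the theorem to the two facts that the $J_w$ multiply by the shuffle product (via the basis change of Section~\ref{changeb}) and the $V_w$ by concatenation, and then invokes the classical Chen--Strichartz/Eulerian-idempotent computation from \cite{Reutenauer93} and \cite{exp_Lie_series}. You simply spell out in full the group-like/primitivity and descent-algebra steps that the paper cites rather than repeats, so there is no substantive difference in approach.
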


The proof of the Theorem is of purely combinatorial nature and depends only of the fact that the $J_w$ multiply according to the shuffle product of words, whereas the $V_w$ multiply according to concatenation. The formula is thus the same as the classical Chen--Strichartz formula and we do not repeat the proof here, it can be found in \cite[Sect. 3.2]{Reutenauer93} or in \cite{exp_Lie_series} that uses a Hopf algebraic approach, conventions and notation closer to the ones in the present article.

It is worthwhile noting that the proof in the latter reference also establishes the following representations for 
$\log\varphi_t$ as a series in the vector field compositions $V_w$ and the iterated integrals $J_w(t)$, respectively:
\begin{align}
\log\varphi_t 
     &= \sum\limits_{w\in\Ab^\ast} J_{\log^\sh(w)} (t)V_{w}\\
& = \sum\limits_{w\in\Ab^\ast} J_w (t)
c_\sigma V_{\sigma(w)},
\end{align}  
where $\log^\sh(w)=\sum_{\sigma \in {\mathbb S}_{|w|}} c_\sigma \sigma^{-1}(w)$.

It will often be more convenient to express the Chen-Strichartz series in terms of the iterated It{\^o} integrals $I_w$. 
For consistency with the treatment of continuous semi-martingales in \cite{exp_Lie_series}, we denote  
$\exp_{\mathrm H}^\dag$ the
 transformation obtained in Lemma
\ref{lemma:exp_dual} describing the basis change from $\overline{A}^\ast$ to
$(\overline{A}^0)^\ast$ (the notation is a reference to Hoffman's exponential isomorphism between shuffle and quasi-shuffle algebras). Thus,
for $\overline{a}\in \overline{A}$ in 
$$
	\overline a
	= \sum_{n\ge 1}\sum_{[a_1, \ldots, a_n]=a} 
		\frac{1}{n!} \overline a_1^0 \cdots \overline a_n^0=:\exp_{\mathrm H}^\dag(\overline a^0).
$$
For a word $\overline w=\overline a_1\dots \overline a_n\in\overline\Ab^\ast$, its expression denoted $\exp_\mathrm{H}^\dag(\overline w^0)$ in the basis $(\overline\Ab^0)^\ast$ is obtained by
replacing each of the letters in $\overline w$, say $\overline{i}^{(m)}$, with
\[
	\exp_{\mathrm H}^\dag\bigl((\overline i^{(m)})^0\bigr)=\sum_{n\ge 1}\frac{1}{n!}
	\sum_{j_1+\ldots +j_n=m} (\overline i^{(j_1)})^0\ldots (\overline i^{(j_n)})^0.
\]
That is,
$$\overline w=:\exp_{\mathrm H}^\dag(\overline w^0):=\exp_{\mathrm H}^\dag(\overline a_1^0)\cdots \exp_{\mathrm H}^\dag(\overline a_n^0).$$

\begin{corollary}[It{\^o} Lie series] \label{cor:Ito_Lie_series}
In terms of the It{\^o} integrals $I_w$ the Chen--Strichartz series is given by
\begin{align*}
\log\bigl(\varphi_t\bigr) &  = 
\log\biggl(\sum\limits_{w\in\Ab^\ast} I_w(t)D_w\biggr) 
  = \sum\limits_{w\in\Ab^\ast} I_{w}(t) \biggl(\sum_{\sigma\in\Sb(|w|)}
\frac{1}{|\sigma|} c_\sigma 
V_{[\exp_{\mathrm H}^\dag (\sigma (w))]_{\cL}}\biggr).
\end{align*}
\end{corollary}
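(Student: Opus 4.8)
The plan is to derive Corollary~\ref{cor:Ito_Lie_series} directly from Theorem~\ref{Th:Chen-Strichartz} by expressing everything in the $\Ab$-basis rather than the $\Ab^0$-basis, using the basis-change maps of Section~\ref{changeb}. The starting point is the identity of the two flowmap representations, $\varphi_t=\sum_{w\in\Ab^\ast}I_w(t)D_w=\sum_{w^0\in(\Ab^0)^\ast}J_w(t)V_w$, which holds because both are $\mu\otimes\bar\mu$ applied to $\mathrm{Id}$ written in two different bases (the lemma preceding Proposition~\ref{idrepflow}). Since $\log\varphi_t$ is intrinsic to the flowmap, the Chen--Strichartz series of Theorem~\ref{Th:Chen-Strichartz} can be re-expanded in the $\Ab$-basis simply by substituting the basis-change formulas.

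The key step is the observation that $\mathrm{Id}$, hence $\log\varphi_t$, is symmetric under the duality pairing in the following sense: when we write $\log\varphi_t = \sum_{w^0}J_w(t)V_{\psi(w)}$ with $\psi$ the Lie series from the theorem, we may equivalently expand each $J_w(t)=\mu(w^0)$ back in terms of the $I_v$, or expand each differential operator $V_{\psi(w)}=\bar\mu(\overline{\psi(w)}^0)$ back in terms of the $D_v$. I would choose the second route: keep the stochastic integrals as the original $I_w$, and push the basis change entirely onto the geometric side. Concretely, using that $\bar\mu$ is an algebra morphism from $(\Rb\la\overline\Ab\ra,\mathrm{conc})$ and that the $\overline a^0$ expand in the $\overline a$ via Corollary after Proposition~\ref{prop:quasishuffleHoopf}, the element $V_{\psi(w)}=\bar\mu((\overline{\psi(w)})^0)$ can be rewritten: each letter $\overline i^{(m)}$ appearing in $\psi(w)$ gets replaced by $\exp_{\mathrm H}^\dag((\overline i^{(m)})^0)$ according to Lemma~\ref{lemma:exp_dual}, which is exactly the content of the displayed definition of $\exp_{\mathrm H}^\dag(\overline w^0)$ preceding the corollary. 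Tracking the bracketing through this substitution is what produces the term $V_{[\exp_{\mathrm H}^\dag(\sigma(w))]_{\cL}}$ in place of $V_{[\sigma(w)]_{\cL}}$, while the coefficients $c_\sigma/|\sigma|$ and the sum over $\sigma\in\Sb(|w|)$ are unchanged because they come purely from the Lie series $\psi$, which is defined combinatorially on the level of words and is insensitive to which basis the letters live in.

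The cleanest way to organise this is: first, apply Theorem~\ref{Th:Chen-Strichartz} to get $\log\varphi_t=\sum_{w\in\Ab^\ast}J_w(t)\,\bar\mu\bigl((\overline{\psi(w)})^0\bigr)$; second, note $J_w(t)=\mu(w^0)$ and rewrite the sum over $(\Ab^0)^\ast$-words as a sum over $\Ab^\ast$-words by the inverse basis change --- but this reintroduces the Hoffman map on the stochastic side, which is not what we want; so instead, third, use the pairing identity $\mathrm{Id}=\sum_{w}w\otimes\overline w$ directly and apply $\mu\otimes\bar\mu$ after taking $\log^\ast$ with respect to the shuffle structure. That is, $\log\varphi_t = \mu\otimes\bar\mu(\log^\ast\mathrm{Id})$, and $\log^\ast\mathrm{Id}$ is a fixed element of the convolution algebra whose image can be computed either way. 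Computing it in the $\Ab^0$-basis gives the $\psi$-formula in terms of $(J_w,V_w)$; re-expressing only the right-hand tensor factor $\overline w^0$ in terms of $\overline v$ via Lemma~\ref{lemma:exp_dual}, while leaving the left factor as $w$ so that $\mu(w)=I_w$, yields precisely the stated formula with $V_{[\exp_{\mathrm H}^\dag(\sigma(w))]_{\cL}}$.

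The main obstacle I anticipate is bookkeeping: one must verify that the left-to-right Lie bracketing $[\,\cdot\,]_{\cL}$ commutes appropriately with the letterwise substitution $\overline i^{(m)}\mapsto\exp_{\mathrm H}^\dag((\overline i^{(m)})^0)$, i.e.\ that $[\exp_{\mathrm H}^\dag(\sigma(w))]_{\cL}$ is the correct image of $[\sigma(w)]_{\cL}$ under the algebra-to-algebra map induced on the geometric side. This requires checking that $\bar\mu$ applied to a Lie element in the $\overline\Ab^0$-generators, when re-expanded, produces the Lie bracketing of the corresponding re-expanded generators --- which is automatic because $\bar\mu$ is an algebra morphism and the expansion $\overline a=\sum\frac{1}{n!}\overline a_1^0\cdots\overline a_n^0$ is simply a change of generating set inside the same free associative algebra, so Lie brackets are preserved. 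Once this compatibility is spelled out, the rest is direct substitution and the corollary follows. I would also remark that since the combinatorial heart is identical to Theorem~\ref{Th:Chen-Strichartz}, whose proof is cited from \cite{Reutenauer93} and \cite{exp_Lie_series}, no new analytic input is needed.
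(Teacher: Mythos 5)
Your overall route is the same as the paper's: both arguments start from Theorem \ref{Th:Chen-Strichartz}, view $\log\varphi_t$ as $\mu\otimes\bar\mu$ applied to a basis-independent element of the (completed) tensor product, change the basis on the stochastic factor from $(\Ab^0)^\ast$ back to $\Ab^\ast$ via Lemma \ref{lemma:exp}, and let duality (Lemma \ref{lemma:exp_dual}) transfer that change to the geometric factor --- which is exactly where $\exp_{\mathrm H}^\dag$ enters. So the architecture of your proof matches the paper's, including the explicit appeal to the duality of the two basis changes.

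The one step where your justification does not hold up is the treatment of the bracketing $[\cdot]_{\cL}$. You argue that $[\exp_{\mathrm H}^\dag(\sigma(w))]_{\cL}$ is the correct image of $[\sigma(w)]_{\cL}$ because ``$\bar\mu$ is an algebra morphism \dots so Lie brackets are preserved.'' That conflates the abstract commutator (which any algebra morphism preserves) with the Dynkin left-to-right bracketing $[\cdot]_{\cL}$, which is a basis-dependent linear operator defined letter by letter on words; it does not commute with a substitution replacing a single letter by an inhomogeneous polynomial in new generators. For instance, if $a=c+de$ then $[ab]_{\cL}=ab-ba=cb-bc+deb-bde$, whereas expanding first and bracketing letterwise in $c,d,e,b$ gives $[cb]_{\cL}+[deb]_{\cL}=cb-bc+deb-dbe-ebd+bed$, which is not the same element. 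The paper avoids this trap by never carrying the bracketing through the substitution: it works with the unbracketed form $\sum_\sigma c_\sigma\,\sigma(\overline w^0)$, performs the basis change to reach $\sum_\sigma c_\sigma\exp_{\mathrm H}^\dag(\sigma(\overline w^0))$, and only at the last step inserts $\frac{1}{|\sigma|}[\cdot]_{\cL}$, justified by the Dynkin--Specht--Wever property applied to what is already known, from Theorem \ref{Th:Chen-Strichartz}, to be a Lie element. Your write-up needs that final argument; the ``morphisms preserve brackets'' shortcut does not supply it. (A smaller slip: your suggestion to ``expand each $V_{\psi(w)}$ back in terms of the $D_v$'' is not what the corollary does --- the geometric side stays expressed in the renormalised operators $V$, with the whole Hoffman transformation absorbed into the subscript --- but you abandon that route yourself.)
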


\begin{proof} For notational simplicity, we abbreviate $\sum_{w\in\Ab^\ast}$ to $\sum_{w}$, and we omit the dependency on $t$ in the proof. It follows from Theorem \ref{Th:Chen-Strichartz} that $\log\varphi$ as a series in the It{\^o} integrals
$I_w$ is a Lie series. 
We recall the representations 
$$
\mu(w):=I_w, \quad \bar{\mu}(\overline{w}):=D_w, \quad
\mu(w^0)=:J_w, \quad \bar{\mu}(\overline{w}^0)=:V_w.
$$  
Hence we have 
\begin{align*}
\log\bigl(\varphi\bigr) &\, = \, 
\log\biggl(\sum_w I_wD_w\biggr)\\
&\,=\,
 \log\biggl(\sum_w J_w V_w\biggr)\\
& \, = \, \sum_w\sum_{\sigma\in{\mathbb S}(|w|)}
 c_\sigma J_w 
V_{\sigma (w)}\\
& \, = \, \mu\otimes\bar{\mu}
\biggl(\sum_{w^0} \sum_{\sigma \in {\mathbb S}(|w^0|)}
 w^0\otimes  c_\sigma \sigma(\overline w^0)\biggr)\\
& \, = \, \mu\otimes\bar{\mu}
\biggl(\sum_{w} \sum_{\sigma \in {\mathbb S}(|w|)}\biggl(\sum\limits_{i_1+\ldots+i_l=|w|}\frac{1}{i_1!\cdots i_l!}(i_1,\cdots,i_l)\circ
w\biggr)\otimes c_\sigma  \sigma(\overline w^0)\biggr)\\
& \, = \, \mu\otimes\bar{\mu}
\biggl(\sum_{w} \sum_{\sigma \in {\mathbb S}(|w|)}
 w\otimes
c_\sigma \exp_{\mathrm H}^\dag (\sigma(\overline w^0))\biggr)\\
& \, = \, \mu\otimes\bar{\mu}
\biggl(\sum_{w} \sum_{\sigma \in {\mathbb S}(|w|)}
 w\otimes \frac{1}{|\sigma|} 
c_\sigma [\exp_{\mathrm H}^\dag (\sigma(\overline w^0))]_{\cL}\biggr)\\
& \, = \, 
\sum_{w} I_w\biggl(\sum_{\sigma \in {\mathbb S}(|w|)}
\frac{1}{|\sigma|} c_\sigma V_{[\exp_{\mathrm H}^\dag (\sigma(w))]_{\cL}}\biggr),
\end{align*} 
as required.
Here we have used for the fifth identity the formula expressing the change
from $J_w$ to $I_w$, see Lemma \ref{lemma:exp}. The sixth identity can be obtained by a direct computation, that we omit. It also follows more abstractly from a duality argument: the 
basis change from $J_w$ to $I_w$ dualises to the basis change between $D_w$ and $V_w$, see Lemma \ref{lemma:exp_dual}. The seventh identity follows from the properties of the Dynkin operator $[\ ]_{\cL}$, see \cite{Reutenauer93}, together with the fact that the right hand side term of 
$\log\bigl(\varphi\bigr)$ is a Lie series, see Theorem \ref{Th:Chen-Strichartz}.
\end{proof}

\begin{remark} As noted above, 
Theorem \ref{Th:Chen-Strichartz} and Corollary \ref{cor:Ito_Lie_series} show that for It{\^o} stochastic differential equations driven by L{\'e}vy processes the logarithm of the flowmap $\varphi_t$ is a Lie series. 

Key in the proof of the Chen--Strichartz formula is the change of basis representing the change from the It{\^o} integrals $I_w$ to the processes $J_w$, that satisfy the classical integration by parts formula, and on the dual side, the change of basis respresenting the change from the differential operators $D_w$
to the composition $V_w$ of the renormalised vector fields. 
Alternatively, we could have deduced these formulas using the Hoffman isomorphisms between quasi-shuffle Hopf algebra and  shuffle Hopf algebra and their adjoints, see \cite{Hoffman2000}.

We also observe how the transformation between the It{\^o} integrals $I_w$ and the processes $J_w$ involves
not only the L{\'e}vy processes driving the stochastic differential
equation but also all power bracket process. This is reminiscent
of the change from It{\^o} integral to Fisk--Stratonovich  integral
for continuous processes.
\end{remark}


\section{Concluding Remarks}\label{sec:conclusion}

The exponential Lie series and Chen--Strichartz formula are well established for the design 
of numerical integration schemes in deterministic
settings and for Wiener-driven stochastic differential
equations, e.g., in the Wiener setting Lord, Malham \& Wiese
\cite{LMW2008} design numerical schemes for the strong solution of Wiener-driven stochastic differential equations based on the exponential Lie series, and Malham \& Wiese
 \cite{Lie_Group_paper} design Lie group methods for solutions that evolve on homogeneous manifolds. In this paper 
we have shown that the logarithm of the flowmap for L{\'e}vy-driven stochastic differential equations is a Lie series. We have furthermore derived an explicit
formula for the terms of the Lie series. These results expand our previous
results establishing the exponential Lie series for continuous semimartingales, see \cite{exp_Lie_series}. A novel and key argument was to define a change of basis such that the new corresponding It{\^o} processes, obtained through this basis change, satisfy the classical integration by parts formula. 
We showed the corresponding differential operators obtained through the basis change are in fact vector fields. Moreover, we identified them explicitly as the components of the pre-Lie
Magnus expansion generated by the original vector fields governing our stochastic system.  

\section*{Acknowledgment}
AW acknowledges support by the EPSRC, grant number EP/Y033248/1. KEF acknowledges support by the Research Council of Norway, project 302831 'Computational Dynamics and Stochastics in Manifolds (CODYSMA)'.


%
\footnotesize

\normalsize
\end{document}